\numberwithin{equation}{section}
\newtheorem{theorem}{Theorem}[section] 
\newtheorem{remark}[theorem]{Remark}
\newtheorem{assumption}[theorem]{Assumption}
\newtheorem{lemma}[theorem]{Lemma}
\newtheorem{corollary}[theorem]{Corollary}
\title{The ensemble Kalman filter for rare event estimation}
\author{F. Wagner, I. Papaioannou, E. Ullmann}
\date{\today}
\begin{document}

\maketitle

\begin{abstract}
We present a novel sampling-based method for estimating probabilities of rare or failure events. Our approach is founded on the Ensemble Kalman filter (EnKF) for inverse problems. Therefore, we reformulate the rare event problem as an inverse problem and apply the EnKF to generate failure samples. To estimate the probability of failure, we use the final EnKF samples to fit a distribution model and apply Importance Sampling with respect to the fitted distribution. This leads to an unbiased estimator if the density of the fitted distribution admits positive values within the whole failure domain. To handle multi-modal failure domains, we localise the covariance matrices in the EnKF update step around each particle and fit a mixture distribution model in the Importance Sampling step. For affine linear limit-state functions, we investigate the continuous-time limit and large time properties of the EnKF update. We prove that the mean of the particles converges to a convex combination of the most likely failure point and the mean of the optimal Importance Sampling density if the EnKF is applied without noise. We provide numerical experiments to compare the performance of the EnKF with Sequential Importance Sampling.
\vspace{0.5cm}
\\\textbf{Keywords:} Reliability analysis, importance sampling, ensemble Kalman filter, inverse problems
\end{abstract}

\section{Introduction}\label{Sec: Introduction}
Estimating the probability of failure is a frequent and crucial task in reliability analysis and risk management \cite{Agarwal17, Morio15}. Failure of a system is determined by the outcome of a \emph{limit-state function} (LSF). By convention, if the outcome is larger than zero, the given state keeps the system in a safe mode. For a negative outcome, the state leads to failure. The \emph{probability of failure} is defined as the probability mass of all failure states. Since failure probabilities are likely to be small, estimation of the failure probability requires the simulation of \emph{rare events}. 
\\Often, the evaluation of the LSF requires the evaluation of a computational expensive model, a partial differential equation, which makes crude Monte Carlo sampling \cite{Fishman96,Rubinstein16} prohibitive. Variance reduction techniques like \emph{Subset Simulation} (SuS) \cite{Au01, Au14}, \emph{Sequential Importance Sampling} (SIS) \cite{Papaioannou16, Wagner20} or the \emph{cross-entropy} based Importance Sampling (IS) method \cite{Kroese13, Papaioannou19, Wang16} have been developed to reduce computational costs while preserving an accurate estimate. In \emph{line sampling} \cite{Angelis15,Koutsourelakis04,Rackwitz01}, sampling is performed on a hyperplane perpendicular to an important direction. Alternative to sampling methods, approximation methods, like the \emph{first} and \emph{second order reliability method} \cite{Melchers18} (FORM/SORM), determine the \emph{most likely failure point} (MLFP) and approximate the surface of the failure domain. In our novel approach, we investigate the \emph{Ensemble Kalman filter} (EnKF) for inverse problems proposed by \cite{Iglesias2013, Schillings16}, which is also known as \emph{ensemble Kalman inversion} (EKI), and apply it to rare event estimation. The EnKF is a sampling-based method. 
\\The Kalman filter \cite{Kalman60} has been originally proposed for data assimilation problems. If the observation operator and dynamic are linear, and the initial and noise distribution are Gaussian, the Kalman filter is exact. The Extended Kalman filter \cite[Section 4.2.2]{Law15} is applied for nonlinear dynamics. In this case, the dynamic is approximated via its linearization and the Kalman update is applied. However, this requires derivative information, which might be costly to obtain. The EnKF \cite{Evensen06} approximates the derivative via an ensemble of particles. Thus, the EnKF does not require derivative information. However, the EnKF is only asymptotically exact if the dynamic is linear. To apply the EnKF to systems with multimodal distributions, the authors of \cite{Dovera11, Li16, Smith07} propose to fit a GM distribution in each EnKF update step and to update the particles belonging to each mixture term separately. 
\\Compared to SIS and SuS, the EnKF has several advantages which are our motivation to implement the EnKF for rare event estimation. The EnKF is easier to implement since a \emph{Markov chain Monte Carlo} (MCMC) \cite{Hastings70} algorithm is not required. Thus, the EnKF contains fewer hyperparameters which have to be tuned. Moreover, it requires no burn-in and no tuning for the optimal acceptance rate has to be performed. Since the EnKF particles are equally weighted, no computational costs are wasted for degenerated samples with very small weights or for rejected samples. Moreover, the EnKF is applicable for high-dimensional problems.
\\In recent years, several theoretical properties of the EnKF have been studied. In \cite{Iglesias2013}, it is shown that the EnKF satisfies the \emph{subspace property}, i.e., the particles stay for all iterations within the subspace spanned by the initial ensemble. The authors of \cite{Bloemker19, Schillings16, Schillings18} investigate the continuous time limit of the EnKF update step, which results in a coupled system of \emph{stochastic differential equations} (SDEs). For a fixed ensemble size, linear system, and considering the limit $t\rightarrow\infty$, the ensemble collapses to its mean value. The authors of \cite{Garbuno20, Herty19} study the mean and covariance of the EnKF particles for an infinite ensemble in the linear Gaussian setting. The distribution of the EnKF particles is equal to the posterior distribution for $t=1$ in the continuous time limit \cite{Garbuno20}. The limit $t\rightarrow\infty$ yields again ensemble collapse. In the nonlinear case, the work of \cite{Ernst15} shows that the distribution of the ensemble particles does not converge to the posterior distribution even for an infinite ensemble. Instead, the particles approximate the distribution of a so-called \emph{analysis variable}.
\\To apply the EnKF algorithm for rare event simulation, we formulate the rare event problem as an inverse problem via an auxiliary LSF. The auxiliary LSF is the concatenation of the rectified linear unit (ReLU) and the original LSF. We apply the EnKF for inverse problems to this reformulation. Since the distribution of the analysis variable differs from the posterior distribution in general, we fit a distribution model with the final EnKF particles and apply IS with respect to the fitted distribution to estimate the probability of failure. This procedure is similar to \cite[Algorithm 3.1]{Kroese13}. In particular, we apply the \emph{Gaussian mixture} (GM) and the \emph{von Mises--Fisher--Nakagami mixture} (vMFNM) distribution model, which have also been used in the context of the cross-entropy method \cite{Papaioannou19}. Due to the properties of IS, we achieve an unbiased estimator for the probability of failure if the density of the fitted distribution admits positive values within the whole failure domain. Additionally, we apply an adaptive approach to determine the sequence of discretization steps. This approach is similar to adaptive \emph{Sequential Monte Carlo} \cite{Beskos13} and is already applied to the EnKF in \cite{Iglesias18}. We demonstrate that the sequence of the resulting EnKF densities is a piecewise smooth approximation of the discontinuous optimal IS density. 
\\Since the EnKF particles always approximate the distribution of one single mode, we apply a multi-modal strategy to handle multi-modal failure domains. In our work, we consider the approach of \cite{Reich19}, where the means and covariance matrices of the EnKF update are localised around each particle. Therefore, the particles evolve individually (within their nearest neighbourhood) to distinct failure modes. The approach of \cite{Reich19} requires the choice of a localization parameter, which we avoid through employing a clustering algorithm.
\\Moreover, we translate some of the already observed theoretical properties for the EnKF to the rare event setting. We derive the continuous time limit of the particle dynamic for affine linear LSFs. If the EnKF is applied without noise in the data space, we prove that the ensemble mean converges to a convex combination of the MLFP and of the mean of the optimal IS density.
\\The manuscript is structured as follows. Section~\ref{Section Settings} reviews the general setting of rare event estimation and introduces an equivalent formulation as an inverse problem. Thereafter, \emph{Bayesian inverse problems} (BIPs) and the EnKF for inverse problems are discussed. Section~\ref{Section EnKF} contains the formulation of the EnKF for estimating the probability of failure. Section~\ref{Sec EnKF for affine linear LSFs} shows theoretical properties of the EnKF for affine linear LSFs. The proofs are given in the Appendix~\ref{Appendix}. In Section~\ref{chapter numerical experiments}, the EnKF is applied to numerical experiments and its performance is compared with SIS. We end this manuscript with a conclusion in Section~\ref{Section Conclusion}.

\section{Problem Setting}\label{Section Settings}
We start by defining failure events and the probability of failure. We discuss the standard formulation and introduce an alternative formulation that draws an analogy to BIPs. We introduce BIPs and the well-known Bayes' theorem. Moreover, we discuss the EnKF for inverse problems and its theoretical properties.

\subsection{Rare event estimation}\label{Sec rare events}
The following notation is based on \cite{Papaioannou15, Papaioannou16}. It is common to define failure events via an LSF that distinguishes safe and failure states. Let $\left(\Omega, \mathcal{A}, \mathbb{P}\right)$ be a probability space. Given is an LSF $G: \mathbb{R}^d\rightarrow \mathbb{R}$, which models the performance of a system. Importantly, the LSF often depends on a computationally intensive numerical model of the system. The state $u\in\mathbb{R}^d$ leads to failure if $G(u)\le 0$; otherwise, $u$ is a safe state.
\\The input state $u\in\mathbb{R}^d$ is a realization of an $\mathbb{R}^d$-valued random variable $U:\Omega\rightarrow\mathbb{R}^d$. We will sometimes use the equivalent definition of a failure event by $\{\omega\in\Omega: G(U(\omega))\le 0\}$ to emphasize that the outcome of the LSF depends on an event $\omega\in\Omega$. The random variable $U$ is distributed according to the \emph{probability density function} (pdf) $\mu_0: \mathbb{R}^d\rightarrow [0,\infty[$. 
\\The goal is to estimate the probability mass of the states $u\in\mathbb{R}^d$, or equivalently, of events $\omega\in\Omega$, which lead to failure. This probability is called \emph{probability of failure} and is denoted by $P_f$. Indeed, the probability of failure is given by
\begin{align}
P_f := \mathbb{P}\left(\{\omega\in\Omega : G(U(\omega)) \le 0\}\right) = \int_{u\in\mathbb{R}^d} I(G(u)\le 0) \mu_0(u)\mathrm{d}u,\label{Probability of failure}
\end{align}
where $I$ denotes the indicator function. Since the \emph{failure domain} $\{u\in\mathbb{R}^d : G(u)\le 0\}$ is a priori unknown, estimating $P_f$ accurately is a nontrivial task. 
\\In IS \cite{Agapiou17,Rubinstein16}, the integral in~\eqref{Probability of failure} is expressed as an expectation with respect to an IS density $p:\mathbb{R}^d\rightarrow\mathbb{R}$
\begin{align*}
P_f = \int_{u\in\mathbb{R}^d} I(G(u)\le0)w(u)p(u)\mathrm{d}u = \mathbb{E}_p[I(G(u)\le 0)w(u)],
\end{align*}
where $w(u):=\mu_0(u)/p(u)$ is the \emph{importance weight}. Using $J$ samples $\{u^{(j)}\}_{j=1}^J$ which are distributed according to $p$, the IS estimator for $P_f$ is given by
\begin{align}
\hat{P}_f^{\mathrm{IS}} := \frac{1}{J}\sum_{j=1}^J I(G(u^{(j)})\le0)w(u^{(j)}).\label{IS estimator}
\end{align}
If the support of $p$ contains the intersection of the support of $\mu_0$ and the failure domain,~\eqref{IS estimator} gives an unbiased estimator for $P_f$. The optimal IS density is
\begin{align}
p_{\mathrm{opt}}(u) := \frac{1}{P_f}I(G(u)\le 0)\mu_0(u),\label{optimal IS density}
\end{align}
which leads to a zero-variance estimator. Sampling-based methods like SuS \cite{Au01, Au14} or SIS \cite{Papaioannou16, Wagner20} aim to generate samples from the optimal IS density. This viewpoint can be interpreted as seeking states $u\in\mathbb{R}^d$ which result in $G(u)\le 0$. It is possible to define this task as an inverse problem. In inverse problems, the goal is to identify the inputs of a model, whose outcome produces a set of given data $y^{\dagger}$. To define an equivalent inverse problem, we apply the ReLU function to the outcome of $G$ and define the auxiliary LSF $\widetilde{G}: \mathbb{R}^d\rightarrow\mathbb{R}$ as 
\begin{align}
\widetilde{G}(u) := \max \{0, G(u)\}.\label{tilde G}
\end{align}
With $\widetilde{G}$ and the data $y^{\dagger}=0$, we reformulate the rare event problem as an inverse problem. We seek all $u\in\mathbb{R}^d$ such that
\begin{align}
\widetilde{G}(u)=0. \label{rare event as inverse}
\end{align}
The overall goal of this manuscript is to apply the EnKF to generate samples from the failure domain and estimate the probability of failure with IS. Since inverse problems of the form~\eqref{rare event as inverse} are ill-posed, we discuss BIPs in the following section.

\begin{remark}
An alternative Bayesian interpretation of the rare event simulation problem is obtained through observation of equation~\eqref{optimal IS density}. The optimal IS density $p_{\mathrm{opt}}$ can be interpreted as a posterior density, where the indicator function $I(G(u)\le 0)$ is the likelihood function, $\mu_0$ is the prior density, and $P_f$ is the evidence. This observation is also discussed in~\cite{Uribe20}. However, this interpretation does not allow application of the EnKF algorithm, which is developed for standard BIPs with equality-type information. The latter implies that the model outcome is compared to the data with an equality sign. The likelihood $I(G(u)\le 0)$ provides inequality-type information.
\end{remark}

\subsection{Bayesian inverse problems}
We consider a forward model $\mathcal{G}:\mathbb{R}^d\rightarrow\mathbb{R}^m$, which maps the input to the output space. Given is the data $y^{\dagger}\in\mathbb{R}^m$. The goal in an inverse problem with noisy observations is to find a state $u^{\dagger}\in\mathbb{R}^d$ such that 
\begin{align}
y^{\dagger} = \mathcal{G}(u^{\dagger}) + \eta,\label{inverse problem}
\end{align}
where $\eta$ is the observational noise and is assumed to be distributed as $\mathrm{N}(0,\Gamma)$, where $\Gamma$ is a symmetric positive definite covariance matrix. Since the classical inverse problem~\eqref{inverse problem} is ill-posed, $(u,y)$ are modelled as realisations from a jointly varying random variable $(U,Y)$ \cite{Dashti2017, Stuart10}. With this Bayesian viewpoint and under mild assumptions on the forward model, noise distribution and prior distribution \cite{Latz20,Stuart10}, the inverse problem is well-posed and the solution is the posterior density $\mu_{y^{\dagger}}$. By virtue of Bayes' theorem \cite[Theorem 14]{Dashti2017}, the posterior density is given by
\begin{align*}
\mu_{y^{\dagger}}(u)=\frac{1}{Z(y^{\dagger})}\exp\left(-\Psi\left(u;y^{\dagger}\right)\right) \mu_0(u),
\end{align*}
where $\mu_0$ is the prior density and $\Psi$ is a potential, defined as $\Psi\left(u;y^{\dagger}\right):=1/2\Vert y^{\dagger}-\mathcal{G}(u)\Vert_{\Gamma}^2$, where $\Vert\cdot\Vert_{\Gamma}:=\Vert\Gamma^{-1/2}\cdot\Vert_2$. The term $\exp\left(-\Psi\left(u;y^{\dagger}\right)\right)$ is the likelihood function and returns the density of the data given a parameter state. The normalizing constant is given by
\begin{align*}
Z(y^{\dagger}):=\int_{\mathbb{R}^d}\exp\left(-\Psi\left(u;y^{\dagger}\right)\right)\mu_0(u)\mathrm{d}u>0.
\end{align*}
For linear models $\mathcal{G}(u)=Au$ with $A\in\mathbb{R}^{m\times d}$ and a Gaussian prior $\mu_0\sim\mathrm{N}(0,\Gamma_0)$, \cite[Theorem 2.4]{Stuart10} shows that the posterior mean $m\in\mathbb{R}^d$ and posterior covariance $C\in\mathbb{R}^{d\times d}$ are given by
\begin{align}
m = \left(A^T\Gamma^{-1}A+\Gamma_0^{-1}\right)^{-1}A^T\Gamma^{-1}y, \quad\quad C = \left(A^T\Gamma^{-1}A+\Gamma_0^{-1}\right)^{-1}.\label{posterior mean + covariance}
\end{align}
\\Similar to rare event estimation, sampling-based methods like Sequential Monte Carlo \cite{Moral06, Doucet11} have been developed to shift samples from the prior density $\mu_0$ to the posterior density $\mu_{y^{\dagger}}$. The EnKF for inverse problems \cite{Schillings16} is another sampling-based method.

\subsection{EnKF for inverse problems}
In this section, we use the notation and derivation of \cite{Schillings16}, which is motivated by Sequential Monte Carlo. In the EnKF for inverse problems, the posterior is reached in a sequential manner. Starting from the prior density $\mu_0$, the sequence of densities is defined by
\begin{align*}
\mu_n(u) \propto \exp\left(-\frac{nh}{2}\Vert y^{\dagger}- \mathcal{G}(u)\Vert_{\Gamma}^2\right)\mu_0(u),\quad\text{ for } n=1,\dots, N,
\end{align*}
where $h:=1/N$ and $N$ is the user-defined number of steps to reach the posterior density. It immediately follows that $\mu_N = \mu_{y^{\dagger}}$ is the posterior density. By the sequential definition, it holds that
\begin{align*}
\mu_{n+1}(u) \propto \exp\left(-\frac{h}{2}\Vert y^{\dagger}-\mathcal{G}(u)\Vert_{\Gamma}^2\right)\mu_n(u),\quad\text{ for } n=0,\dots, N-1.
\end{align*}
\begin{remark}\label{Remark downscaling}
The $n$-th step is equivalent to the solution of the inverse problem~\eqref{inverse problem} where the noise $\eta$ is distributed according to $\mathrm{N}(0,(nh)^{-1}\Gamma)$. If $nh>1$, i.e. $n>N$, the noise covariance is down-scaled.
\end{remark}
The sequence of densities $\mu_n$ is approximated via an ensemble of equally weighted particles. The initial ensemble $\mathbf{u_0}=\{u_0^{(j)}\}_{j=1}^J$ is distributed according to the prior density $\mu_0$. In one step of the EnKF, an ensemble of $J\in\mathbb{N}$ samples $\mathbf{u_n}=\{u_n^{(j)}\}_{j=1}^J$, which is distributed (approximately) according to the density $\mu_n$, is transformed into the ensemble $\mathbf{u_{n+1}} = \{u_{n+1}^{(j)}\}_{j=1}^J$, which is distributed (approximately) as $\mu_{n+1}$. Formally, the ensemble $\mathbf{u_n}$ is updated via
\begin{align}
u_{n+1}^{(j)} = u_{n}^{(j)} + C_{\mathrm{up}}(\mathbf{u_n})\left(C_{\mathrm{pp}}(\mathbf{u_n}) + \frac{1}{h}\Gamma\right)^{-1}\left(y_{n+1}^{(j)}-\mathcal{G}(u_{n}^{(j)})\right),\label{EnKF inverse uk}
\end{align}
for $j=1,\dots,J$, where $y_{n+1}^{(j)}$ is the data $y^{\dagger}$ perturbed by an additive Gaussian noise
\begin{align}
y_{n+1}^{(j)} = y^{\dagger} + \xi_{n+1}^{(j)},\label{nosie for y}
\end{align}
and $\xi_{n+1}^{(j)}\sim\mathrm{N}(0,h^{-1}\Gamma)$ represents the observational noise scaled by the step size $h^{-1}$. The matrices $C_{\mathrm{pp}}\in\mathbb{R}^{m\times m}$ and $C_{\mathrm{up}}\in\mathbb{R}^{d\times m}$ are the empirical covariance and cross-covariance matrices and are given by
\begin{align}
C_{\mathrm{pp}}(\mathbf{u_n}) &= \frac{1}{J}\sum_{j=1}^J \left(\mathcal{G}(u_{n}^{(j)})-\overline{\mathcal{G}}\right)\otimes\left(\mathcal{G}(u_{n}^{(j)})-\overline{\mathcal{G}}\right),\label{Cpp matrix}
\\ C_{\mathrm{up}}(\mathbf{u_n}) &= \frac{1}{J}\sum_{j=1}^J \left(u_{n}^{(j)}-\overline{u}\right)\otimes\left(\mathcal{G}(u_{n}^{(j)})-\overline{\mathcal{G}}\right),\label{Cup matrix}
\end{align}
where $\otimes$ denotes the outer product of two vectors and $\overline{u}$, $\overline{\mathcal{G}}$ are the empirical means
\begin{align*}
\overline{u} = \frac{1}{J}\sum_{j=1}^J u_{n}^{(j)}, \quad\quad \overline{\mathcal{G}} = \frac{1}{J}\sum_{j=1}^J \mathcal{G}(u_{n}^{(j)}).
\end{align*}
The authors of \cite{Schillings16} show that in the \emph{continuous time limit} $h\rightarrow 0$, the update~\eqref{EnKF inverse uk} leads to the following SDE
\begin{align}
\frac{\mathrm{d}u^{(j)}}{\mathrm{d}t} &= C_{\mathrm{up}}(\mathbf{u})\Gamma^{-1}\left(y^{\dagger}-\mathcal{G}(u^{(j)})\right) + C_{\mathrm{up}}(\mathbf{u})\Gamma^{-1/2}\frac{\mathrm{d}W^{(j)}}{\mathrm{d}t}\label{SDE}
\\&=\frac{1}{J}\sum_{k=1}^J\left\langle \mathcal{G}(u^{(k)})-\overline{\mathcal{G}}, y^{\dagger}-\mathcal{G}(u^{(j)}) + \Gamma^{1/2}\frac{\mathrm{d}W^{(j)}}{\mathrm{d}t}\right\rangle_{\Gamma}\left(u^{(k)}-\overline{u}\right),\notag
\end{align}
where $W^{(j)}$ is a Brownian motion \cite{Peres10}, which represents the observational noise. If we consider the noise free case, i.e. $y_{n+1}^{(j)} = y^{\dagger}$, and assume that the forward model $\mathcal{G}$ is linear, i.e. $\mathcal{G}(u)=Au$ for $A\in\mathbb{R}^{m\times d}$, it holds that
\begin{align}
\frac{\mathrm{d}u^{(j)}}{\mathrm{d}t} = -C_{\mathrm{uu}}(\mathbf{u})D_u\Psi(u^{(j)};y^{\dagger}) = - C_{uu}(\mathbf{u})\left(A^T\Gamma^{-1}Au^{(j)} - A^T\Gamma^{-1}y^{\dagger}\right),\label{EnKf inverse sample flow}
\end{align}
where $C_{uu}(\mathbf{u})$ is the empirical covariance matrix of the ensemble $\mathbf{u}$. Equation~\eqref{EnKf inverse sample flow} implies that the samples move in the direction of the negative gradient of the data misfit. The multiplication with $C_{\mathrm{uu}}(\mathbf{u})$ can be interpreted as a pre-conditioner of the particle dynamic \cite{Garbuno20}. In Section~\ref{Sec EnKF for affine linear LSFs}, we will show that a similar expression holds true for the setting of rare event estimation if we consider $\mathcal{G}=\widetilde{G}$ as defined in~\eqref{tilde G}.
\\The authors in \cite{Garbuno20} show for the linear case and the mean field limit $J\rightarrow\infty$ that the mean $m(t)$ and covariance $C(t)$ of the EnKF particles satisfy the following differential equations
\begin{align*}
\frac{\mathrm{d}}{\mathrm{d}t}m(t) &= -C(t)\left(A^T\Gamma^{-1}Am(t)-A^T\Gamma^{-1}y^{\dagger}\right),
\\\frac{\mathrm{d}}{\mathrm{d}t}C(t) &= -C(t)A^T\Gamma^{-1}AC(t),
\end{align*}
which lead to the solutions
\begin{align*}
m(t) = \left(A^T\Gamma^{-1}At+ \Gamma_0^{-1}\right)^{-1}A^T\Gamma^{-1}y^{\dagger},\quad\quad C(t) = \left(A^T\Gamma^{-1}At + \Gamma_0^{-1}\right)^{-1}.
\end{align*}
Thus for $t=1$, it holds that the mean and covariance of the EnKF ensemble is equal to the posterior mean and posterior covariance~\eqref{posterior mean + covariance}. However, for $t\rightarrow\infty$, the covariance $C(t)$ converges to zero, which gives ensemble collapse. Therefore, the EnKF for inverse problems gives samples of the posterior for $t=1$. For $t\rightarrow\infty$, the EnKF can be seen as an optimization method instead as a sampling method \cite{Garbuno20, Schillings16}. If $\mathcal{G}$ is nonlinear, the work of \cite{Ernst15} shows that the EnKF gives approximate samples from the analysis variable 
\begin{align}
U^a:= U+\mathrm{Cov}(U,\mathcal{G}(U) + \eta)\mathrm{Cov}(\mathcal{G}(U) + \eta)^{-1}(y^{\dagger} - \mathcal{G}(U) - \eta).\label{Ua}
\end{align}
We note that the distribution of $U^a$ is in general different from the posterior distribution. Moreover, $U^a$ represents a single EnKF update. Thus, for the continuous time limit and the limit $t\rightarrow\infty$, the analysis variable cannot be used to derive the limit of the mean $m(t)$, since the EnKF applies infinitely many update steps. 
\begin{remark}
The authors of \cite{Garbuno20} show that the EnKF can be adjusted in a way that the covariance of the ensemble does not collapse for $t\rightarrow \infty$. Instead, the posterior is reached for $t\rightarrow\infty$. In their approach, the noise is not added in the data space as in equation~\eqref{nosie for y}. Instead, it is added in the input space. However, for rare event estimation, the noise $\eta$ and noise covariance $\Gamma$ are artificial. Therefore, it is useful to consider $t>1$ for the EnKF approach in \cite{Schillings16}, which constitutes a downscaling of the noise as noted in Remark~\ref{Remark downscaling}. We will use a stopping criterion to determine when the downscaling of the noise is sufficient.
\end{remark}

\section{EnKF for rare event estimation}\label{Section EnKF}
In the following, we apply the EnKF approach of \cite{Schillings16} to the forward model $\mathcal{G}=\widetilde{G}$. With $\widetilde{G}$, the EnKF generates failure samples. To reduce the number of iterations, we apply the adaptive approach of \cite{Iglesias18}, which is similar to the adaptive approach of SIS given in \cite{Papaioannou16}. Since the EnKF does not provide samples from the posterior distribution but gives approximate samples from the analysis variable $U^a$, we apply an IS strategy to estimate the probability of failure. Additionally, we discuss an approach to handle multi-modal failure domains. 

\subsection{EnKF with adaptive step size}
We consider the case that the output space of the LSF $G$ is one-dimensional. This is the usual setting in rare event estimation and simplifies the following considerations. However, the EnKF for rare event estimation can be also applied to a general output space $\mathbb{R}^m$. By \cite{Kiureghian86, Hohenbichler81}, we assume, without loss of generality, that the underlying random variable $U:\Omega \rightarrow\mathbb{R}^d$ is a $d$-variate standard normally distributed random variable. Thus, we assume that the prior $\mu_0(u)=\varphi_d(u)$ is the $d$-variate standard Gaussian density. 
\\Translating the rare event setting into the Bayesian framework results in the data $y^{\dagger}=0$ and the auxiliary LSF $\widetilde{G}(u) = \max \{0, G(u)\}$, as mentioned in Section~\ref{Sec rare events}. Now, we can define the rare event problem as an inverse problem. The goal is to find $u^{\dagger}\in\mathbb{R}^d$ such that
\begin{align}
0 = \widetilde{G}(u^{\dagger}) + \eta,\label{rare event inverse}
\end{align}
where $\eta\sim\mathrm{N}(0,\Gamma)$ is observational noise and $\Gamma>0$. We note that the noise is artificial and we do not know $\Gamma$. However, it turns out that $\Gamma$ is not relevant for the EnKF with adaptive step size and the desired noise level is specified by the user. The posterior density of the inverse problem~\eqref{rare event inverse} is given by
\begin{align}
\mu_{y^{\dagger}}(u) \propto \exp\left(-\frac{1}{2\Gamma}\widetilde{G}(u)^2\right)\varphi_d(u).\label{posterior rare events}
\end{align} 
\begin{remark}
Since the auxiliary LSF $\widetilde{G}$ is always nonlinear, even if the original LSF is linear, the approximation of the derivative by the particles is not exact and the posterior is non-Gaussian. Thus by \cite{Ernst15}, the EnKF generates approximate samples from the analysis variable $U^a$~\eqref{Ua} which differs from the posterior density $\mu_{y^{\dagger}}$~\eqref{posterior rare events}.
\end{remark}
Compared with the SIS approach of \cite{Papaioannou16}, we can see that~\eqref{posterior rare events} is an alternative way to define a piecewise smooth approximation of the optimal IS density $p_{\mathrm{opt}}$ in~\eqref{optimal IS density}. In the same manner as in \cite{Iglesias18, Papaioannou16}, we determine the sequence of the EnKF densities adaptively. We define a sequence of temperatures $\sigma_n$ with $\infty=\sigma_0>\sigma_1>\sigma_2>\dots>\sigma_N>0$, where $N$ is the number of a priori unknown EnKF iterations. The sequence of densities in the EnKF is given by
\begin{align}
\mu_n(u) \propto \exp\left(-\frac{1}{2\sigma_n\Gamma}\widetilde{G}(u)^2\right)\varphi_d(u).\label{EnKF densities}
\end{align}
By the sequential definition of the EnKF it holds that
\begin{align*}
\mu_{n+1}(u) \propto \exp\left(-\frac{1}{2\Gamma}\left(\frac{1}{\sigma_{n+1}}-\frac{1}{\sigma_n}\right)\widetilde{G}(u)^2\right)\mu_{n}(u).
\end{align*}
By defining $t_n=1/\sigma_n$, the temperatures $\sigma_n$ can be viewed as a time discretization of the SDE~\eqref{SDE}. Considering the limit $\sigma_n\rightarrow 0$ in~\eqref{EnKF densities}, or $t_n\rightarrow\infty$, leads to pointwise convergence of the EnKF densities to the optimal IS density since
\begin{align*}
\lim_{\sigma_n\rightarrow 0} \exp\left(-\frac{1}{2\sigma_n\Gamma}\widetilde{G}(u)^2\right) = I(G(u)\le 0).
\end{align*} 
This behaviour is illustrated in Figure~\ref{indicator approximation}. We note that the limit $\sigma_n\rightarrow 0$ implies a complete downscaling of the noise $\Gamma$, which results in a classical ill-posed inverse problem. The approximation of the indicator function by SIS \cite{Papaioannou16} is given by
\begin{align*}
\lim_{\sigma_n\rightarrow 0} \Phi\left(-\frac{G(u)}{\sigma_n}\right) = I(G(u)\le 0), \quad\text{ for } G(u)\neq 0,
\end{align*}
where $\Phi(\cdot)$ is the cumulative distribution function of the one-dimensional standard normal distribution. Figure~\ref{indicator approximation} shows that the approximation by the EnKF is equal to the indicator function for $G(u)\le 0$, while the SIS approximation is symmetric around $G(u) = 0$.
\begin{figure}[htbp]
\centering
	\includegraphics[trim=0cm 1cm 0cm 0cm,scale=0.23]{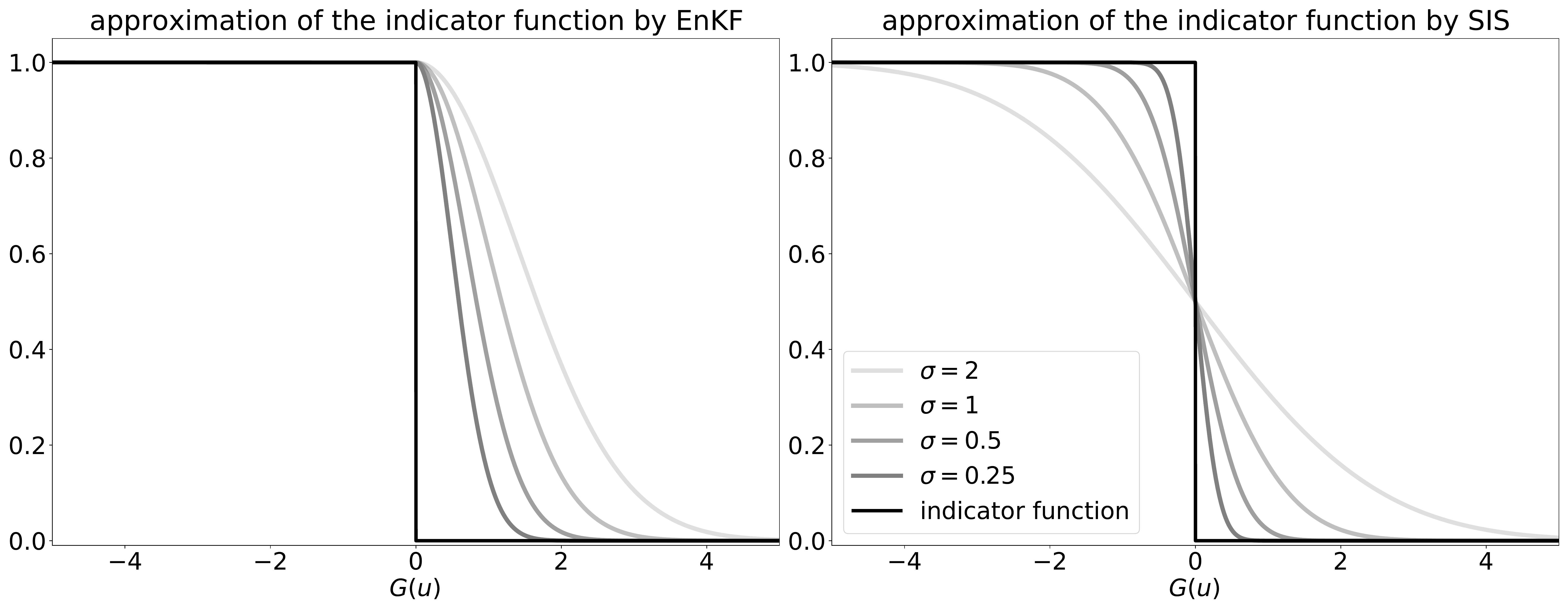}
		\caption[Indicator approximation]{Approximation of the indicator function $I(G(u)\le 0)$ by the EnKF and SIS.}
		\label{indicator approximation}
\end{figure}
\\Similar to \cite{Papaioannou16}, we use the coefficient of variation of the likelihood weights of two subsequent densities to determine the sequence $\sigma_n$. Therefore, we use a user specific target coefficient of variation $\delta_{\mathrm{target}}$. Given the temperature $\sigma_n$, we determine $\sigma_{n+1}$ by
\begin{align}
\sigma_{n+1} = \underset{\sigma \in (0,\sigma_n)}{\mathrm{argmin}}\quad \frac{1}{2}\left(\delta_{\mathbf{w_{n+1}}} - \delta_{\mathrm{target}}\right)^2,\label{Opimitzation sigma n+1}
\end{align} 
where $\delta_{\mathbf{w_{n+1}}}$ is the coefficient of variation of the likelihood weights $\mathbf{w_{n+1}} =\{w_{n+1}^{(j)}\}_{j=1}^J$ of the consecutive EnKF densities $\mu_n$ and $\mu_{n+1}$. These weights are given by
\begin{align*}
w_{n+1}^{(j)} = \exp\left(-\frac{1}{2\Gamma}\left(\frac{1}{\sigma_{n+1}}-\frac{1}{\sigma_{n}}\right)\widetilde{G}(u_n^{(j)})^2\right), \quad\text{ for } j=1,\dots, J.
\end{align*}
Through this adaptive procedure, it is not necessary to specify the variance $\Gamma$ of the observational noise. Therefore, we assume, without loss of generality, that $\Gamma = 1$. 
\\With the new temperature $\sigma_{n+1}$, we define the step size of the EnKF as $h_{n+1} = 1/\sigma_{n+1} - 1/\sigma_n$. Hence in one step of the EnKF, the ensemble $\mathbf{u_n}$ is updated by
\begin{align}
u_{n+1}^{(j)} = u_{n}^{(j)} + C_{\mathrm{up}}(\mathbf{u_n})\left(C_{\mathrm{pp}}(\mathbf{u_n}) + \frac{1}{h_{n+1}}\right)^{-1}\left(\xi_{n+1}^{(j)}-\widetilde{G}(u_{n}^{(j)})\right),\label{EnKF rare events uk}
\end{align}
for $j=1,\dots, J$, where $\xi_{n+1}^{(j)}\sim\mathrm{N}(0, h_{n+1}^{-1})$ is an additive scaled noise in the output space. The matrices $C_{\mathrm{pp}}(\mathbf{u_n})$ and $C_{\mathrm{up}}(\mathbf{u_n})$ are determined by~\eqref{Cpp matrix} and~\eqref{Cup matrix}, respectively, using $\widetilde{G}$ instead of $G$. Since the output space is one-dimensional, it holds that $C_{\mathrm{pp}}(\mathbf{u_n})\in\mathbb{R}$ and $C_{\mathrm{up}}(\mathbf{u_n})\in\mathbb{R}^{d}$. Hence,~\eqref{EnKF rare events uk} does not require the solution of a linear system and the computational costs of one EnKF step behaves as $\mathcal{O}(d\cdot J)$. 
\\As in SIS \cite{Papaioannou16}, we use the target coefficient of variation $\delta_{\mathrm{target}}$ as the stopping criterion of the EnKF iteration. In particular, the EnKF stops if the coefficient of variation of the likelihood weights with respect to the optimal IS density and the current EnKF density is less than $\delta_{\mathrm{target}}$. These likelihood weights are given by
\begin{align}
w_{\mathrm{opt}}^{(j)} = I\left(\widetilde{G}(u_n^{(j)})=0\right)\exp\left(\frac{1}{2\sigma_n}\widetilde{G}(u_{n}^{(j)})^2\right),\quad\text{ for } j=1,\dots,J.\label{weights optimal density}
\end{align}
The EnKF iteration stops if $\delta_{\textbf{w}_{\mathrm{opt}}} \le \delta_{\mathrm{target}}$. By the indicator function and $\widetilde{G}$, the weights $w_{\mathrm{opt}}^{(j)}$ are either zero or one and, therefore, the stopping criterion can be easily interpreted by the \emph{effective sample size} \cite[Section 3.4]{Latz18}. This implies that a certain percentage of the particles have to belong to the failure domain to stop the EnKF iteration. This percentage depends on $\delta_{\mathrm{target}}$. If $\delta_{\mathrm{target}}$ is small, then a high percentage of the particles has to belong to the failure domain. For instance, if $\delta_{\mathrm{target}}=0.25$, then around $94\%$ of the particles has to be in the failure domain. For $\delta_{\mathrm{target}}=3$, around $10\%$ of the particles has to be in the failure domain. We investigate different values for $\delta_{\mathrm{target}}$ in the numerical experiments.

\begin{remark}
We note that SIS applies an MCMC algorithm to shift samples between one density and its consecutive density. Therefore, the generated samples are distributed according to the target densities and the probability of failure can be directly estimated using these samples. However, the EnKF generates approximate samples from the analysis variable \cite{Ernst15}. Therefore, we do not use these samples to estimate the probability of failure directly. Instead, we use a single IS step after the EnKF iteration is finished, which yields an unbiased estimator for the probability of failure.
\end{remark}

\subsection{Estimation of the probability of failure}\label{Section estimate Pf}
Once the EnKF iteration is finished, the final ensemble is denoted by $\mathbf{u}_N$ and the final EnKF density is given by
\begin{align*}
\mu_N(u) \propto \exp\left(-\frac{1}{2\sigma_N}\widetilde{G}(u)^2\right)\mu_0(u).
\end{align*}
We follow \cite[Algorithm 3.1]{Kroese13} to construct an accurate and unbiased estimator for the probability of failure. Their approach is based on IS. Therefore, we fit a certain distribution model with the final ensemble $\textbf{u}_{N}$. Consequently, we generate the ensemble $\mathbf{\hat{u}} = \{\hat{u}^{(j)}\}_{j=1}^J$ from the fitted distribution and estimate the probability of failure by
\begin{align}
\hat{P}_f = \frac{1}{J}\sum_{j=1}^{J}I(G(\hat{u}^{(j)})\le 0)\frac{\varphi_d(\hat{u}^{(j)})}{p(\hat{u}^{(j)})},\label{Pf IS}
\end{align}
where $p(\cdot)$ is the pdf of the fitted distribution. In particular, we consider the GM \cite[Section 1]{Schnatter06} and the vMFNM distribution model \cite{Papaioannou19}.

The GM is defined by the weighted sum of $K$ Gaussian distributions, where each of them is defined by a mean vector $m_k$ and a covariance matrix $C_k$ yielding the pdf $\varphi(\cdot\mid m_k, C_k)$. Thus, the density of the GM distribution is given by
\begin{align}
p_{\mathrm{GM}}(u) = \sum_{k=1}^{K}\pi_k \varphi(u\mid m_k, C_k),\label{GM}
\end{align}
where $\pi_k\ge 0$ is the weight of the $k$th mixture component with $\sum_{k=1}^K \pi_k=1$. The parameters of the GM are determined by maximum likelihood estimation using the final EnKF ensemble $\mathbf{u_N}$. For $K>1$ mixture components, the parameters of the GM are not analytically given and the \emph{expectation-maximization} (EM) algorithm is applied. We refer to \cite{McLachlan05} for a detailed explanation of the EM algorithm.
\\The vMFNM distribution is based on the polar decomposition $u=r\cdot a$, where $r=\Vert u\Vert_2$ and $a=u/r$. For a single mixture component, the pdf of the vMFN distribution is given by
\begin{align*}
p_{\mathrm{vMFN}}(r,a\mid \nu, \kappa, s, \gamma) = p_{\mathrm{N}}(r\mid s,\gamma)\cdot p_{\mathrm{vMF}}(a\mid\nu,\kappa),
\end{align*}
where $p_{\mathrm{N}}$ is the density of the Nakagami distribution and $p_{\mathrm{vMF}}$ is the pdf of the von Mises--Fisher distribution. The Nakagami \cite{Nakagami60} distribution determines the distribution of the radius $r$ and depends on a shape parameter $s$ and spread parameter $\gamma$. The vMF distribution \cite{Wang16} determines the distribution of the direction $a$ and depends on the mean direction $\nu$ and concentration parameter $\kappa$. Similar to~\eqref{GM}, the vMFNM distribution is defined by
\begin{align*}
p_{\mathrm{vMFNM}}(r, a)=\sum_{k=1}^{K} \pi_k p_{\mathrm{vMFN}}(r,a\mid \nu_k, \kappa_k, s_k, \gamma_k).
\end{align*}
The parameters of the vMFNM distribution are determined by maximum likelihood estimation and the EM algorithm. Algorithm ~\ref{EnKF alg} shows the complete algorithm to estimate the probability of failure with the EnKF.
\begin{algorithm}
\caption{EnKF for rare event estimation}\label{EnKF alg}
\begin{algorithmic}[1]
\STATE generate the initial ensemble $\mathbf{u_0}$ by sampling $J$ independent samples from $\varphi_d(u)$
\STATE evaluate the auxiliary LSF $\widetilde{G}$ for the current ensemble $\mathbf{u_0}$
\STATE $\sigma_0 \leftarrow \infty$, $n\leftarrow 0$
\WHILE{EnKF is not finished}
	\STATE determine $\sigma_{n+1}$ from the optimization problem~\eqref{Opimitzation sigma n+1}
	\STATE define the current step size $h_{n+1}=1/\sigma_{n+1} - 1/\sigma_n$
	\STATE calculate $C_{\mathrm{pp}}(\mathbf{u_n})$ and $C_{\mathrm{up}}(\mathbf{u_n})$ based on~\eqref{Cpp matrix},\eqref{Cup matrix}
	\STATE generate $J$ independent samples $\xi_{n+1}^{(j)}\sim\mathrm{N}\left(0,h_{n+1}^{-1}\right)$
	\STATE update the ensemble $\mathbf{u_n}$ to $\mathbf{u_{n+1}}$ based on~\eqref{EnKF rare events uk}
	\STATE evaluate the auxiliary LSF $\widetilde{G}$ for the current ensemble $\mathbf{u_{n+1}}$
	\STATE determine the coefficient of variation $\delta_{\textbf{w}_{\mathrm{opt}}}$ based on~\eqref{weights optimal density}
	\IF{$\delta_{\textbf{w}_{\mathrm{opt}}}<\delta_{\mathrm{target}}$}
		\STATE EnKF is finished
	\ENDIF
	\STATE $n\leftarrow n+1$
\ENDWHILE
\STATE fit a distribution model based on the final ensemble $\mathbf{u_n}$
\STATE generate $J$ samples $\mathbf{\hat{u}}$ from the fitted distribution
\STATE estimate the probability of failure $\hat{P}_f$ based on~\eqref{Pf IS}
\RETURN $\hat{P}_f$
\end{algorithmic}
\end{algorithm}

\begin{remark}
We remark on the applicability of the density models to large-scale problems. The GM model is applicable for low-dimensional parameter spaces. For moderate- and high-dimensional spaces, the performance of the GM model deteriorates due to two reasons. Firstly, the number of parameters which have to be estimated for the GM model scales quadratically with respect to the input parameter dimension $d$~\cite{Papaioannou19}. Secondly, Gaussian distributions suffer from the concentration of measure phenomena. As discussed in~\cite{Katafygiotis08,Papaioannou19,Wang16}, the probability mass of the standard normal distribution in high dimensions is mostly concentrated near the \emph{important ring}, which is the hypersphere with radius $\sqrt{d}$. Hence, in high dimensions, it becomes challenging to fit a GM model such that it has significant probability content near the important ring. This leads to degenerated importance weights and most samples are associated with zero weights. The behavior of the GM model in high-dimensional problems is numerically demonstrated in \cite{Geyer19} in the context of the cross-entropy method.
\\The vMFNM distribution model extends the application of the EnKF to moderately high-dimensional parameter spaces. The number of parameters which have to be estimated for the vMFNM distribution model scales linearly with respect to $d$~\cite{Papaioannou19}. Moreover, the vMFNM distribution model operates in polar coordinates. Hence, for a proper choice of the radius parameters, all samples will be located in the proximity of the important ring, which avoids the weight degeneracy issue. We note that for high-dimensional parameter spaces, the number of samples per level $J$ has to be large to ensure that the parameters of the vMFNM distribution model are estimated accurately. A numerical study of the von Mises--Fisher distribution model with respect to an increasing input parameter dimension $d$ is given in~\cite{Wang16}.
\end{remark}
We have now defined the procedure of estimating the probability of failure via the EnKF. As we have seen, the GM and vMFNM distributions are able to capture mixture distributions. However, the EnKF in its current form is not able to generate samples from a mixture, since the samples are always concentrated around a single mean value.

\subsection{Multi-modal EnKF}\label{Subsection multi modal EnKF}
In this section, we investigate the approach of \cite{Reich19} such that the EnKF is able to generate samples from a multi-modal failure domain. This property is necessary for rare event estimation since it is possible that the failure domain consists of various distinct modes. 
\\To achieve the multi-modal property, the empirical covariance and cross-covariance matrices $C_{\mathrm{pp}}, C_{\mathrm{up}}$ are localised for each particle of the ensemble. Therefore, we calculate a weight matrix $W\in\mathbb{R}^{J\times J}$, which represents the distances of the particles. The entries of the weight matrix are given by 
\begin{align}
W_{i,j} = \exp\left(-\frac{1}{2\alpha}\Vert u_n^{(i)} - u_n^{(j)}\Vert_2^2\right), \quad\text{ for } i,j=1,\dots,J,\label{weight matrix}
\end{align}
where $\alpha>0$ is a parameter chosen by the user. In addition, the weight matrix is normalised such that each column sums up to one. A large value of $\alpha$ leads to higher weights for the neighbouring particles. For small $\alpha$, the weights are only large for the nearest neighbours, which yields more localised covariance matrices and the particles move slowly since the particles do not interact with their neighbours \cite{Reich19}.
\begin{remark}\label{Remark adaptive approach multi modal}
We note that~\eqref{weight matrix} is the value of the pdf $\varphi_d(u_n^{(i)}\mid u_n^{(j)}, \alpha\cdot\mathrm{Id}_d)$, i.e., the Gaussian density with mean vector $u_n^{(j)}$ and covariance matrix $\alpha\cdot \mathrm{Id}_d$, where $\mathrm{Id}_d\in\mathrm{R}^{d\times d}$ denotes the identity matrix. 
\end{remark}
To avoid the selection of the parameter $\alpha$, we propose the following adaptive approach. We first apply a distribution-based clustering through fitting a mixture distribution model. Either of the two distribution models discussed in Section~\ref{Section estimate Pf} can be used for this purpose. The ensemble $\mathbf{u_n}$ is thus split into $K$ clusters. For each cluster $k=1,\dots,K$, we determine the empirical covariance matrix $C_k$ by using all particles which belong to the cluster $k$. Finally, for the particle $u_n^{(j)}$ which belongs to the cluster $k$, the $j$th column of the weight matrix is determined by
\begin{align}
W_{i,j} = \exp\left(-\frac{1}{2}\Vert C_k^{-1/2}(u_n^{(i)} - u_n^{(j)})\Vert_2^2\right), \quad\text{ for } i=1,\dots,J.\label{weight matrix adaptive}
\end{align}
\begin{remark}\label{Not adaptive approach multi modal}
Alternative to this adaptive procedure, the parameter $\alpha$ in~\eqref{weight matrix} can be chosen as $\alpha\propto d$. This approach could be applied in high-dimensional problems to address the curse of dimensionality of the Euclidean norm. This is due to the fact that the Euclidean distance of the points $u_1 = (0,\dots,0)\in\mathbb{R}^d$ and $u_2=(\epsilon,\dots,\epsilon)^T\in\mathbb{R}^d$ satisfies
\begin{align*}
\Vert u_1 - u_2\Vert_2^2 = d\epsilon^2.
\end{align*}
\end{remark}
With the weight matrix $W$, we calculate for each particle localised means and covariance matrices. The localised means for the particle $u_n^{(j)}$ are given by
\begin{align*}
\overline{u}_{\mathrm{loc}}^{(j)} = \sum_{i=1}^J W_{i,j} u_n^{(i)}, \quad \quad \overline{\widetilde{G}}_{\mathrm{loc}}^{(j)} = \sum_{i=1}^J W_{i,j} \widetilde{G}(u_n^{(i)}).
\end{align*}
With these localised means, we determine the localised covariance matrices by
\begin{align}
C_{\mathrm{loc}, \mathrm{pp}}(u_n^{(j)}) &= \sum_{i=1}^J W_{i,j} \left(\widetilde{G}(u_{n}^{(i)})-\overline{\widetilde{G}}_{\mathrm{loc}}^{(j)}\right)\otimes\left(\widetilde{G}(u_{n}^{(i)})-\overline{\widetilde{G}}_{\mathrm{loc}}^{(j)}\right),\label{C_pp local}
\\ C_{\mathrm{loc}, \mathrm{up}}(u_n^{(j)}) &= \sum_{i=1}^J W_{i,j}\left(u_{n}^{(i)}-\overline{u}_{\mathrm{loc}}^{(j)}\right)\otimes\left(\widetilde{G}(u_{n}^{(i)})-\overline{\widetilde{G}}_{\mathrm{loc}}^{(j)}\right).\label{C_up local}
\end{align}
Finally, one iteration of the EnKF with localised covariance matrices is given by 
\begin{align}
u_{n+1}^{(j)} = u_{n}^{(j)} + C_{\mathrm{loc},\mathrm{up}}(u_{n}^{(j)})\left(C_{\mathrm{loc}, \mathrm{pp}}(u_{n}^{(j)}) + \frac{1}{h_{n+1}}\right)^{-1}\left(\xi_{n+1}^{(j)}-G(u_{n}^{(j)})\right),\label{EnKF update multi modal}
\end{align}
for $j=1,\dots,J$. After the EnKF iteration is finished, we fit a mixture distribution model and estimate the probability of failure as in Section~\ref{Section estimate Pf}. We summarise the EnKF update with localised covariances in Algorithm~\ref{EnKF alg multi modal}.

\begin{algorithm}
\caption{EnKF update with localised covariances}\label{EnKF alg multi modal}
\begin{algorithmic}[1]
\IF{$\alpha$ is given}
	\FOR{all particles $u_n^{(j)}$}
		\STATE determine the $j$th column of the normalized weight matrix by~\eqref{weight matrix}
	\ENDFOR 
\ELSE
	\STATE split the ensemble $\mathbf{u_n}$ into $K$ clusters
	\FOR{all clusters $k$}
		\STATE determine all particles which belong to the cluster $k$
		\STATE determine the empirical covariance matrix $C_k$
		\FOR{all particles $u_n^{(j)}$ belonging to the cluster $k$}
			\STATE determine the $j$th column of the normalized weight matrix by~\eqref{weight matrix adaptive}
		\ENDFOR
	\ENDFOR
\ENDIF
\FOR{all particles $u_n^{(j)}$}
	\STATE determine $C_{\mathrm{loc}, \mathrm{pp}}(u_n^{(j)})$, $C_{\mathrm{loc}, \mathrm{up}}(u_n^{(j)})$ by~\eqref{C_pp local}, ~\eqref{C_up local}
	\STATE update $u_n^{(j)}$ to $u_n^{(j+1)}$ by~\eqref{EnKF update multi modal}
\ENDFOR
\end{algorithmic}
\end{algorithm}

\section{EnKF for affine linear LSFs: theoretical properties}\label{Sec EnKF for affine linear LSFs}
In this section, we derive theoretical properties of the EnKF for rare event estimation. We consider the case where the LSF $G$ is affine linear, i.e. $G(u)=a^Tu-b$, where $a\in\mathbb{R}^d$ and $b\in\mathbb{R}$. The linearity of $G$ implies that the auxiliary LSF $\widetilde{G}=\max \{0, G(u)\}$ is piecewise linear. 
\\In particular, we derive the continuous time limit $h\rightarrow 0$ of the EnKF update~\eqref{EnKF rare events uk} and investigate the large particle limit $J\rightarrow\infty$ of the resulting SDE. Thereafter, we study the large time properties $t\rightarrow\infty$ to determine the limit of the ensemble mean. We will always consider the noise free case, i.e., $y_{n+1}^{(j)}=y^{\dagger}$. We do not analyse the analysis variable in~\eqref{Ua}, since we are particularly interested in the continuous time limit and the large time properties.
\\In the next theorem, we derive the continuous time limit $h\rightarrow 0$ of the EnKF update~\eqref{EnKF rare events uk}. We note that we add the failure surface $\{G=0\}$ to the safe domain for the remaining section. This has no influence on the probability of failure estimate since the failure surface is a set with Lebesgue measure equal to zero. However, adding $\{G=0\}$ to the safe domain simplifies the following considerations since safe particles remain safe states and failure particles remain failure states for all $t\ge 0$.
\begin{theorem}\label{Theorem linear noise free}
Denote by $S=\{k\in\lbrack J \rbrack :G(u^{(k)})\ge0\}$ and $F=\{k\in\lbrack J\rbrack: G(u^{(k)})<0\}$ the index sets of the safe and failure particles. For the LSF $G(u)=a^Tu-b$ and for the noise free case $y_{n+1}^{(j)}=y^{\dagger}$, the safe particles satisfy the flow 
\begin{align}
\frac{\mathrm{d}u^{(j)}}{\mathrm{d}t} = -C_{\mathrm{uu}}(\mathbf{u})D_u\left(\frac{1}{2}G(u^{(j)})^2\right) + \frac{G(u^{(j)})}{J}\sum_{k\in F}G(u^{(k)})(u^{(k)}-\overline{u}),\label{gradient flow rare events}
\end{align}
while failure particles do not move.
\end{theorem}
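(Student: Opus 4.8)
The plan is to pass to the continuous-time limit $h\to 0$ in the noise-free update and then rewrite the resulting drift by exploiting the affine structure of $G$.

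First I would specialise the update~\eqref{EnKF rare events uk}. In the noise-free case $y_{n+1}^{(j)}=y^{\dagger}=0$ the innovation collapses to $-\widetilde{G}(u_n^{(j)})$, and since the output is scalar,
\[
u_{n+1}^{(j)}-u_{n}^{(j)} = -\,C_{\mathrm{up}}(\mathbf{u_n})\Bigl(C_{\mathrm{pp}}(\mathbf{u_n})+\tfrac{1}{h_{n+1}}\Bigr)^{-1}\widetilde{G}(u_n^{(j)}) = -\,h_{n+1}\,\frac{C_{\mathrm{up}}(\mathbf{u_n})}{1+h_{n+1}\,C_{\mathrm{pp}}(\mathbf{u_n})}\,\widetilde{G}(u_n^{(j)}).
\]
Dividing by $h_{n+1}$ and letting $h_{n+1}\to 0$ (so that the discrete times $t_n=1/\sigma_n$ fill out a continuum) yields the limiting flow $\dot u^{(j)}=-\,C_{\mathrm{up}}(\mathbf{u})\,\widetilde{G}(u^{(j)})$, where $C_{\mathrm{up}}(\mathbf{u})=\frac{1}{J}\sum_{k=1}^{J}(u^{(k)}-\overline{u})\otimes(\widetilde{G}(u^{(k)})-\overline{\widetilde{G}})$ with $\overline{\widetilde{G}}=\frac{1}{J}\sum_{k}\widetilde{G}(u^{(k)})$. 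Because $\sum_{k}(u^{(k)}-\overline{u})=0$, the mean subtraction in the second slot of $C_{\mathrm{up}}$ drops out and $\dot u^{(j)}=-\,\widetilde{G}(u^{(j)})\,\frac{1}{J}\sum_{k=1}^{J}(u^{(k)}-\overline{u})\,\widetilde{G}(u^{(k)})$.

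Next I would use $\widetilde{G}=\max\{0,G\}$. For $j\in F$ we have $G(u^{(j)})<0$, hence $\widetilde{G}(u^{(j)})=0$ and $\dot u^{(j)}=0$: failure particles do not move. For $j\in S$ we have $\widetilde{G}(u^{(j)})=G(u^{(j)})$, and in the sum only indices $k\in S$ survive (since $\widetilde{G}(u^{(k)})=0$ for $k\in F$), so $\dot u^{(j)}=-\,G(u^{(j)})\,\frac{1}{J}\sum_{k\in S}(u^{(k)}-\overline{u})\,G(u^{(k)})$. Now invoke $G(u)=a^{T}u-b$: again by $\sum_{k}(u^{(k)}-\overline{u})=0$ the constant $b$ contributes nothing, and writing $a^{T}u^{(k)}=a^{T}(u^{(k)}-\overline{u})+a^{T}\overline{u}$ gives $\frac{1}{J}\sum_{k=1}^{J}(u^{(k)}-\overline{u})\,G(u^{(k)})=\bigl(\frac{1}{J}\sum_{k}(u^{(k)}-\overline{u})\otimes(u^{(k)}-\overline{u})\bigr)a=C_{\mathrm{uu}}(\mathbf{u})\,a$. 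Splitting this full sum into its $S$- and $F$-parts produces $\frac{1}{J}\sum_{k\in S}(u^{(k)}-\overline{u})\,G(u^{(k)})=C_{\mathrm{uu}}(\mathbf{u})\,a-\frac{1}{J}\sum_{k\in F}G(u^{(k)})(u^{(k)}-\overline{u})$. Substituting this into the expression for $\dot u^{(j)}$ and using $G(u^{(j)})\,a=D_u\bigl(\frac{1}{2}G(u^{(j)})^{2}\bigr)$ gives precisely~\eqref{gradient flow rare events}.

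The computation is elementary; the only step deserving care is the continuous-time limit. There one should note that $C_{\mathrm{up}}$ and $C_{\mathrm{pp}}$ depend continuously — indeed piecewise smoothly, through $\widetilde{G}$ — on the finite particle configuration, so the rescaled increments converge locally uniformly and the limit is a genuine ODE whose right-hand side $-C_{\mathrm{up}}(\mathbf{u})\widetilde{G}(u^{(j)})$ is globally Lipschitz on bounded sets; the kink of $\widetilde{G}$ at $\{G=0\}$ is harmless because on each of the regions $\{G(u^{(j)})\ge 0\}$ and $\{G(u^{(j)})<0\}$ the flow takes the stated form, and since failure particles are stationary (and, as is checked separately, safe particles keep the sign of $G$) the index sets $S$ and $F$ remain constant along trajectories.
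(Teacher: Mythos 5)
Your proposal is correct and follows essentially the same route as the paper: pass to the noise-free continuous-time limit of~\eqref{EnKF rare events uk} (the paper quotes this flow from Schillings--Stuart, you rederive it from the discrete update), observe that $\widetilde{G}=0$ freezes the failure particles, and then use the centering identity $\sum_k(u^{(k)}-\overline{u})=0$ together with the affine structure of $G$ to split the empirical sum into its safe and failure parts, identifying the full-ensemble part with $-C_{\mathrm{uu}}(\mathbf{u})D_u\bigl(\tfrac{1}{2}G(u^{(j)})^2\bigr)$. Your bookkeeping (dropping the mean subtraction in the second slot and writing $\frac{1}{J}\sum_k(u^{(k)}-\overline{u})G(u^{(k)})=C_{\mathrm{uu}}(\mathbf{u})a$) is a slightly leaner version of the paper's four-term splitting, but it is the same argument.
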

\begin{remark}
The first summand of~\eqref{gradient flow rare events} implies that the safe particles move in the direction where the LSF $G$ decreases. If the mean $\overline{u}$ is in the safe domain, it follows that $G(u^{(k)})(u^{(k)}-\overline{u})$ points away from the failure surface since $G(u^{(k)})<0$ for $k\in F$. Thus, the second term slows down the movement to the failure surface. Indeed, we prove that the safe particles converge to the surface of the failure domain.
\end{remark}
From Theorem~\ref{Theorem linear noise free}, we observe that initial failure particles do not move in the EnKF iterations. If initial safe particles reach the failure surface, they will stay at the failure surface for the remaining time. Thus, initial safe particles follow the dynamic~\eqref{gradient flow rare events} and do not contribute to the second term in~\eqref{gradient flow rare events} for all $t\ge 0$. If the initial ensemble does not contain failure particles, the second term in~\eqref{gradient flow rare events} is zero for all $t\ge 0$ and the dynamic simplifies. Therefore, we distinguish the cases if initial failure particles are present or not.
\\In the following, we consider the continuous time limit~\eqref{gradient flow rare events} for an infinite ensemble $J\rightarrow\infty$. We denote by $U(t)$ the random variable which is distributed according to the particle density of the EnKF particles $\{u^{(j)}\}_{j=1}^{\infty}$ at the time point $t\ge 0$. Moreover, we denote by $m(t):=\mathbb{E}[U(t)]$ the mean of the ensemble and by $C(t):=\mathbb{E}[(U(t)-m(t))\otimes(U(t)-m(t))]$ the covariance matrix of the ensemble. We derive the limit of the ensemble mean under the following assumptions. 
\begin{assumption}\label{Ass lemma}
We assume that
\begin{itemize}
\item[(i)] $G(u) = a^Tu-b$ with $a=\left(1,0,\dots,0\right)^T\in\mathbb{R}^d$ and $b<0$,
\item[(ii)] the distribution of the input random variable $U$ is $d$-variate independent standard normal,
\item[(iii)] the EnKF is applied without noise, i.e., $y_{n+1}^{(j)}=y^{\dagger}$. 
\end{itemize}
\end{assumption}
We note that Assumption~\ref{Ass lemma}~(i) can be satisfied without loss of generality by rotation invariance of the standard normal density. 

\subsection{Mean-field limit: no failure particles}\label{Sec no failure}
We begin with the case that the initial ensemble does not contain failure particles. Thus, the second term in~\eqref{gradient flow rare events} is zero and the distinction of safe and failure states is not necessary. We note that this assumption is not valid, if we consider the large particle limit $J\rightarrow\infty$ and a Gaussian initial ensemble, since the initial ensemble will contain failure states. However, this assumption simplifies the proofs of the statements and it enables us to obtain insights in the particle dynamic and mean-field limit. We consider the case that the initial ensemble contains failure samples in Section~\ref{Sec with failure samples}.
\\In the next lemma, we show that the ensemble mean $m(t)$ converges to the MLFP $u^{\mathrm{MLFP}}\in\mathbb{R}^d$ under Assumption~\ref{Ass lemma} with the restriction that the initial ensemble contains no failure particles. The MLFP is the solution of the minimization problem
\begin{align*}
\underset{u\in\mathbb{R}^d}{\mathrm{min}} \quad \frac{1}{2} \Vert u \Vert_2^2, \quad\text{ such that } G(u) = 0.
\end{align*}
For general nonlinear problems, the FORM approximation of $P_f$ is given by $P_f^{\mathrm{FORM}}=\Phi\left(-\Vert u^{\mathrm{MLFP}}\Vert_2\right)$ if $G(0)>0$. If $G(0)<0$, $P_f^{\mathrm{FORM}}= 1 - \Phi\left(-\Vert u^{\mathrm{MLFP}}\Vert_2\right)$. Indeed, $P_f^{\mathrm{FORM}}$ is equal to $P_f$ if the LSF $G$ is affine linear. Assumption~\ref{Ass lemma}~(i) implies that $u^{\mathrm{MLFP}}=\left(b,0,\dots,0\right)^T$, $G(0)>0$, and $\Vert u^{\mathrm{MLFP}} \Vert_2 = \vert b \vert$. 

\begin{theorem}\label{Lemma FORM}
Let Assumption~\ref{Ass lemma} hold under the restriction that the initial ensemble contains no failure particles even for an infinite ensemble. In the large particle limit $J\rightarrow\infty$, the ensemble mean satisfies
\begin{align*}
m_1(t) = b\left(1 - \frac{1}{\sqrt{2t+1}}\right), \quad\quad m_i(t)=0, \quad\text{ for } i=2,\dots,d,
\end{align*}
while the covariance of the ensemble satisfies 
\begin{align}
C(t) = \begin{pmatrix}
1/(1+2t) & \\
 & \mathrm{Id}_{d-1}
\end{pmatrix}.\label{equation covariance thm}
\end{align}
Thus, for the limit $t\rightarrow\infty$, it holds
\begin{align*}
\lim_{t\rightarrow\infty} m(t) = u^{\mathrm{MLFP}}.
\end{align*}
\end{theorem}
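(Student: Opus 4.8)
The plan is to start from the finite-ensemble flow of Theorem~\ref{Theorem linear noise free}, pass to the mean-field limit, and then reduce the resulting McKean--Vlasov dynamics to a closed system of ordinary differential equations for the mean $m(t)$ and the covariance $C(t)$. Since no failure particles are present, the second term in~\eqref{gradient flow rare events} vanishes identically, and because $G(u)=a^Tu-b$ we have $D_u\bigl(\tfrac12 G(u)^2\bigr)=G(u)\,a$. Hence each particle obeys $\dot u^{(j)}=-C_{\mathrm{uu}}(\mathbf u)\,G(u^{(j)})\,a$, and letting $J\to\infty$ the empirical covariance $C_{\mathrm{uu}}(\mathbf u)$ converges to $C(t)$, so that the limiting random variable solves $\dot U(t)=-C(t)\,G(U(t))\,a$. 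Taking expectations gives $\dot m(t)=-C(t)\,(a^Tm(t)-b)\,a$, and writing $V(t)=U(t)-m(t)$ one finds $\dot V(t)=-C(t)\,aa^T V(t)$, so that differentiating $C(t)=\mathbb E[V(t)V(t)^T]$ yields the Riccati-type equation $\dot C(t)=-2\,C(t)\,aa^T C(t)$. The key point is that, because $G$ is affine, both equations are \emph{closed} in $(m,C)$: no higher moments of $U(t)$ enter, so it is irrelevant whether $U(t)$ remains Gaussian.

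Next I would solve these ODEs explicitly using Assumption~\ref{Ass lemma}~(i), i.e.\ $a=e_1$, together with the initial data $m(0)=0$, $C(0)=\mathrm{Id}_d$ coming from the standard normal prior. With $aa^T=e_1e_1^T$, the right-hand side of the covariance equation is the outer product of the first column and first row of $C(t)$; the diagonal ansatz $C(t)=\mathrm{diag}(c(t),1,\dots,1)$ is therefore invariant, and by uniqueness of solutions it \emph{is} the solution, where $c$ solves $\dot c=-2c^2$, $c(0)=1$, hence $c(t)=1/(1+2t)$. This is exactly~\eqref{equation covariance thm}. Substituting into the mean equation, the components $i\ge2$ satisfy $\dot m_i=0$, so $m_i(t)=0$; and $g(t):=m_1(t)-b$ solves the linear scalar equation $\dot g=-g/(1+2t)$ with $g(0)=-b$, whose solution is $g(t)=-b/\sqrt{1+2t}$, i.e.\ $m_1(t)=b\bigl(1-1/\sqrt{1+2t}\bigr)$. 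Letting $t\to\infty$ gives $m_1(t)\to b$ and $m_i(t)\to0$, that is $m(t)\to(b,0,\dots,0)^T=u^{\mathrm{MLFP}}$, as claimed.

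The main obstacle is the rigorous justification of the mean-field limit $J\to\infty$: the finite-$J$ system~\eqref{gradient flow rare events} is an interacting particle system, and one must establish a propagation-of-chaos statement ensuring that the empirical measure converges to the law of the McKean--Vlasov process $\dot U=-C(t)G(U)a$ and, in particular, that $C_{\mathrm{uu}}(\mathbf u)\to C(t)$. This is easier here than in general, since the interaction enters only through the covariance and the drift is affine in $U$, so one essentially needs well-posedness of the closed $(m,C)$-system -- which the explicit solution above provides on $[0,\infty)$ -- together with a Gr\"onwall-type estimate controlling the fluctuation of the empirical covariance; alternatively one may simply take the mean-field equation as the definition of $U(t)$, consistent with the phrasing ``in the large particle limit''. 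A minor point worth flagging is the tension between Assumption~\ref{Ass lemma}~(ii) and the no-failure-particle restriction -- a genuinely standard normal ensemble does contain failure particles -- so the computation is carried out with the idealized initial moments $m(0)=0$, $C(0)=\mathrm{Id}_d$ and the simplified flow, in keeping with the role of this section as providing insight rather than a fully realizable scenario.
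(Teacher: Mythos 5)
Your proposal is correct and follows essentially the same route as the paper: drop the failure term, pass to the mean-field equation $\dot U=-C(t)(aa^TU-ab)$, derive the closed ODEs $\dot m=-C(aa^Tm-ab)$ and $\dot C=-2Caa^TC$, and solve them explicitly with $m(0)=0$, $C(0)=\mathrm{Id}_d$. The only cosmetic difference is that you integrate the covariance Riccati equation via a diagonal ansatz and the scalar equation $\dot c=-2c^2$, while the paper inverts it through $\mathrm{d}C^{-1}/\mathrm{d}t=2aa^T$; your remarks on propagation of chaos and on the tension with Assumption~\ref{Ass lemma}~(ii) match caveats the paper itself acknowledges.
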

\begin{corollary}
Let Assumption~\ref{Ass lemma} hold under the restriction that the initial ensemble contains no failure particles even for an infinite ensemble. The relative distance between $u^{\mathrm{MLFP}}$ and the ensemble mean $m(t)$ satisfies
\begin{align*}
\frac{\Vert m(t) - u^{\mathrm{MLFP}}\Vert_2}{\Vert u^{\mathrm{MLFP}}\Vert_2} = \frac{1}{\sqrt{2t+1}}.
\end{align*}
\end{corollary}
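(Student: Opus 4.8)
The plan is to obtain the corollary as an immediate consequence of Theorem~\ref{Lemma FORM}, essentially by a single substitution. First I would record the two ingredients supplied by the statement and the discussion preceding it: under Assumption~\ref{Ass lemma}~(i) the most likely failure point equals $u^{\mathrm{MLFP}} = (b,0,\dots,0)^T$, so that $\|u^{\mathrm{MLFP}}\|_2 = |b|$; and, in the large particle limit, the ensemble mean is $m_1(t) = b\bigl(1 - (2t+1)^{-1/2}\bigr)$ with $m_i(t) = 0$ for $i = 2,\dots,d$.

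Next I would form the difference componentwise. All coordinates $i \ge 2$ vanish in both vectors, while the first coordinate gives $m_1(t) - b = -b/\sqrt{2t+1}$, hence $m(t) - u^{\mathrm{MLFP}} = \bigl(-b/\sqrt{2t+1},\,0,\dots,0\bigr)^T$ and therefore $\|m(t) - u^{\mathrm{MLFP}}\|_2 = |b|/\sqrt{2t+1}$. Dividing by $\|u^{\mathrm{MLFP}}\|_2 = |b|$ and cancelling $|b|$, which is nonzero since $b<0$ by Assumption~\ref{Ass lemma}~(i), yields the asserted identity $1/\sqrt{2t+1}$.

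There is no genuine obstacle here: the whole argument is the substitution above, and the only point requiring a moment of care is that $b$ is negative, which is harmless because only the norms (hence $|b|$) enter. One may additionally remark that the resulting relative error is independent of the problem data $a,b$ and of the dimension $d$ and decays like $t^{-1/2}$, mirroring the decay of the first diagonal entry of the covariance matrix in~\eqref{equation covariance thm}.
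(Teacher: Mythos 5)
Your proof is correct and is precisely the computation the paper intends (the corollary is stated as an immediate consequence of Theorem~\ref{Lemma FORM} and the explicit form $u^{\mathrm{MLFP}}=(b,0,\dots,0)^T$ with $\Vert u^{\mathrm{MLFP}}\Vert_2=\vert b\vert$; no separate proof is given in the appendix). Your substitution of $m_1(t)=b\bigl(1-(2t+1)^{-1/2}\bigr)$, the norm computation, and the cancellation of $\vert b\vert>0$ (valid since $b<0$ by Assumption~\ref{Ass lemma}~(i)) match that argument exactly.
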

\begin{remark}
From equation~\eqref{equation covariance thm}, we see that the covariance $C(t)$ implies ensemble collapse in the first component for $t\rightarrow\infty$. However, the ensemble does not collapse to a single point but it collapses to the surface of the failure domain. Thus, we see that the particles move only in direction perpendicular to the failure surface, or equivalently, in direction of $a$, until all particles are on the failure surface.
\end{remark}

\subsection{Mean-field limit: with failure particles}\label{Sec with failure samples}
In this section, we consider the case that the initial ensemble contains failure particles. To derive the mean-field equation, we split the ensemble into the safe and failure particles. We define $U_S(t)$ as the random variable which is distributed according to the particle density of the safe particles $\{u^{(j)} : G(u^{(j)})\ge0\}_{j=1}^{\infty}$ at $t\ge 0$. Similar, $U_F(t)$ is the random variable which is distributed according to the particle density of the failure particles $\{u^{(j)} : G(u^{(j)})<0\}_{j=1}^{\infty}$ at $t\ge 0$. We note that the distribution of $U_F(t)$ stays constant with respect to $t$ since failure particles do not move. 
\\Since the portion of failure particles in the initial ensemble is equal to $P_f$ for $J\rightarrow\infty$, it holds that $\mathbb{P}(U(t) = U_S(t)) = (1-P_f)$ and $\mathbb{P}(U(t)=U_F(t)) = P_f$. Thus, the mean of the ensemble is given by
\begin{align}
m(t)= \mathbb{E}[U(t)] &= (1-P_f)\mathbb{E}[U_S(t)] + P_f\mathbb{E}[U_F(t)]\notag
\\ &=:(1-P_f)m_S(t)+P_fm_F(t).\label{mean safe and failure}
\end{align}
The mean $m_F(t)$ of the failure particles is constant and is equal to the mean of the optimal IS density, which is given by
\begin{align}
m_F = \mathbb{E}[U\mid G(U)<0] = \mathbb{E}[U\mid U_1<b] = \left(- \frac{\varphi_1(b)}{\Phi(b)},0,\dots,0\right)^T =:u^{\mathrm{opt}},\label{mean opt}
\end{align}
where we apply the formula of the mean of a truncated Gaussian \cite[Section 10.1]{Johnson94}. Indeed, $u^{\mathrm{opt}}$ is the mean of a standard Gaussian random variable truncated at $U_1 < b$. The following theorem states that the ensemble mean $m(t)$ converges to a convex combination of the MLFP $u^{\mathrm{MLFP}}$ and the mean of the optimal IS density $u^{\mathrm{opt}}$ under Assumption~\ref{Ass lemma}.

\begin{theorem}\label{Thm mean failure}
Let Assumption~\ref{Ass lemma} hold. For the large particle limit $J\rightarrow\infty$, the ensemble mean satisfies
\begin{align}
\lim_{t\rightarrow\infty} m(t) = (1-P_f) u^{\mathrm{MLFP}} + P_f u^{\mathrm{opt}}.\label{eq thm mean failure}
\end{align}
\end{theorem}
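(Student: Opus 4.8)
The plan is to start from the decomposition~\eqref{mean safe and failure}, namely $m(t) = (1-P_f)\,m_S(t) + P_f\,m_F(t)$, and to use that in the noise-free affine-linear setting failure particles are frozen (Theorem~\ref{Theorem linear noise free}), so that $m_F(t) \equiv m_F = u^{\mathrm{opt}}$ by~\eqref{mean opt}. It therefore suffices to show $\lim_{t\to\infty} m_S(t) = u^{\mathrm{MLFP}} = (b,0,\dots,0)^T$; substituting this into~\eqref{mean safe and failure} then yields~\eqref{eq thm mean failure}.

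First I would reduce the analysis of the safe particles to one dimension. Under Assumption~\ref{Ass lemma} the prior, the failure domain $\{u_1 < b\}$, and the flow~\eqref{gradient flow rare events} are equivariant under sign flips and permutations of the coordinates $u_2,\dots,u_d$, and, since the filter runs without noise, the mean-field flow is a deterministic map on distributions; hence $m_i(t) = 0$ and $C_{1i}(t) = 0$ for $i = 2,\dots,d$ and all $t \ge 0$, and in fact the coordinates $i\ge 2$ of the mean-field particle do not move. Plugging $G(u) = u_1 - b$ and $D_u(\tfrac{1}{2} G^2) = (u_1-b)\,a$ into~\eqref{gradient flow rare events} and passing to the mean-field limit $J\to\infty$ (so $C_{\mathrm{uu}}(\mathbf u)\to C(t)$, $\overline u\to m(t)$, and $\tfrac1J\sum_{k\in F}(\cdot)\to P_f\,\mathbb E[\,\cdot\mid U_1<b]$), the first coordinate of each safe particle obeys the scalar \emph{linear} equation
\begin{equation*}
\frac{\mathrm d u^{(j)}_1}{\mathrm d t} = -\lambda(t)\,\bigl(u^{(j)}_1 - b\bigr),\qquad \lambda(t) := C_{11}(t) - P_f\,\mathbb E\bigl[(U_1-b)(U_1 - m_1(t))\mid U_1 < b\bigr].
\end{equation*}
Because the right-hand side is proportional to $u^{(j)}_1 - b$, every safe particle contracts towards the surface $\{u_1=b\}$ by the same factor: with $\beta$ solving $\dot\beta = -\lambda\beta$, $\beta(0)=1$, one has $u^{(j)}_1(t)-b = \beta(t)\,(u^{(j)}_1(0)-b)$, hence $m_{S,1}(t) - b = \beta(t)\,\epsilon_0$ and $\mathrm{Var}(U_{S,1}(t)) = \beta(t)^2 v_0$, where $\epsilon_0 := m_{S,1}(0)-b > 0$ and $v_0 := \mathrm{Var}(U_{S,1}(0)) > 0$ are the mean shift and variance of a standard normal truncated to $[b,\infty)$.

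Next I would make $\lambda(t)$ explicit as a function of $\beta(t)$. Using the law of total variance $C_{11}(t) = (1-P_f)\mathrm{Var}(U_{S,1}(t)) + P_f\sigma_F^2 + (1-P_f)P_f\,(m_{S,1}(t)-\mu_F)^2$, the identity $m_1(t) = (1-P_f)m_{S,1}(t) + P_f\mu_F$, and the truncated-Gaussian moments $\mu_F = \mathbb E[U_1\mid U_1<b] = u^{\mathrm{opt}}_1$ and $\sigma_F^2 = \mathrm{Var}(U_1\mid U_1<b)$ from~\cite[Section~10.1]{Johnson94}, a short computation shows that all $t$-independent contributions cancel and
\begin{equation*}
\lambda(t) = (1-P_f)\,v_0\,\beta(t)^2 + (1-P_f)P_f\,\epsilon_0\,\beta(t)\,\bigl((b-\mu_F) + \epsilon_0\beta(t)\bigr).
\end{equation*}
Since $b-\mu_F > 0$ (the conditional mean of a below-$b$-truncated standard normal is a.s.\ $<b$) and $v_0,\epsilon_0,\beta > 0$, we get $\lambda(t) > 0$ and, more importantly, $\dot\beta = -\lambda\beta \le -B\,\beta^2$ with $B := (1-P_f)P_f\,\epsilon_0\,(b-\mu_F) > 0$. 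Comparison with $\dot z = -Bz^2$, $z(0)=1$, gives $\beta(t) \le 1/(1+Bt) \to 0$, whence $m_{S,1}(t) = b + \beta(t)\epsilon_0 \to b$, so $m_S(t) \to u^{\mathrm{MLFP}}$ and~\eqref{eq thm mean failure} follows from~\eqref{mean safe and failure}.

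The main obstacle is the middle part: justifying the mean-field limit of the failure-particle interaction term in~\eqref{gradient flow rare events} (and confirming that the frozen failure population contributes exactly $P_f\,\mathbb E[\,\cdot\mid U_1<b]$, i.e.\ the mean of the optimal IS density), and carrying out the algebra that produces the precise cancellation in $\lambda(t)$. The cancellation is the delicate point: it forces $\lambda(t)\to 0$, so the safe ensemble contracts only algebraically (like $1/t$) rather than exponentially as in the no-failure case of Theorem~\ref{Lemma FORM}, and one must check — as the estimate $\dot\beta\le -B\beta^2$ does — that the decay is nonetheless fast enough for $\beta(t)\to 0$.
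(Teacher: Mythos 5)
Your proposal is correct, and its skeleton coincides with the paper's: decompose $m(t)=(1-P_f)m_S(t)+P_fm_F$ via~\eqref{mean safe and failure}, use that the frozen failure particles have mean $u^{\mathrm{opt}}$, pass to the mean-field limit of~\eqref{gradient flow rare events}, reduce to the first coordinate, and show positivity of the coefficient multiplying $-(u_{S,1}-b)$. Indeed your $\lambda(t)=C_{11}(t)-P_f\,\mathbb E[(U_1-b)(U_1-m_1(t))\mid U_1<b]$ equals the paper's bracket $C(t)-P_f(1-m(t)(u^{\mathrm{opt}}-b))$ in~\eqref{dynamic safe mean}, since $\mathbb E[(U_1-b)(U_1-m_1)\mid U_1<b]=1-m_1(u^{\mathrm{opt}}-b)$, and your exact identity $\lambda=(1-P_f)v_0\beta^2+(1-P_f)P_f(m_S-b)(m_S-u^{\mathrm{opt}})$ is precisely the sharpened form of Lemma~\ref{lemma covariance} combined with the paper's quadratic $f(m_S)=(m_S-b)(m_S-u^{\mathrm{opt}})$; the "main obstacle'' you flag (the mean-field limit of the failure interaction term) is exactly what Lemma~\ref{Lemma mean field limit} supplies, with the same truncated-Gaussian moments, so there is no gap there. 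Where you genuinely differ is the final step: the paper argues qualitatively that the right-hand side of~\eqref{dynamic safe mean} is negative whenever $m_S(t)>b$ and concludes convergence, which strictly speaking only yields monotone convergence to some limit $\ge b$ unless one adds a short contradiction argument; your exploitation of the common contraction factor $\beta(t)$ (all safe particles obey the same linear ODE, so $\mathrm{Var}(U_{S,1})=\beta^2v_0$ and $m_{S,1}-b=\beta\epsilon_0$) plus the comparison $\dot\beta\le -B\beta^2$ gives $\beta(t)\le 1/(1+Bt)$, which both closes that gap rigorously and provides a convergence rate the paper does not state. One small correction to your closing remark: the no-failure case of Theorem~\ref{Lemma FORM} is not exponential either — there $m_1(t)-b=-b/\sqrt{2t+1}$, i.e.\ algebraic decay of order $t^{-1/2}$, so the presence of failure particles in fact accelerates the contraction of the safe mean (order $t^{-1}$) rather than slowing it down.
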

In the following, we point out the structure of the proof of Theorem~\ref{Thm mean failure}, since this gives important insights to the dynamic of the EnKF particles. The formal proofs are given in the Appendix~\ref{Appendix}.
From~\eqref{mean safe and failure} and~\eqref{mean opt}, it is sufficient to show that the mean of the safe particles $m_S(t)$ converges to $u^{\mathrm{MLFP}}$. To prove this statement, we derive the mean field equation of the particle dynamic~\eqref{gradient flow rare events}.
\begin{lemma}\label{Lemma mean field limit}
Let Assumption~\ref{Ass lemma} hold. For the large particle limit $J\rightarrow\infty$, the mean field equation of the safe particles is given by
\begin{align}
\frac{\mathrm{d}u_S(t)}{\mathrm{d}t} = -C(t)D_u\left(\frac{1}{2}G(u_S(t))^2\right) + G(u_S(t)) P_f\left((1,0,\dots,0)^T -m(t)(u_1^{\mathrm{opt}} - b)\right).\label{mean field limit failure}
\end{align}
\end{lemma}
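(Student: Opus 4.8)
The plan is to start from the finite-ensemble flow \eqref{gradient flow rare events} restricted to the safe particles, pass to the large-particle limit $J\to\infty$, and identify each term with its mean-field counterpart. Under Assumption~\ref{Ass lemma}, $D_u\!\left(\tfrac12 G(u^{(j)})^2\right) = G(u^{(j)})\,a = G(u^{(j)})\,(1,0,\dots,0)^T$, so the first summand of \eqref{gradient flow rare events} is $-C_{\mathrm{uu}}(\mathbf{u})\,G(u^{(j)})\,(1,0,\dots,0)^T$, which converges to $-C(t)\,G(u_S(t))\,(1,0,\dots,0)^T = -C(t)D_u\!\left(\tfrac12 G(u_S(t))^2\right)$ once the empirical covariance $C_{\mathrm{uu}}(\mathbf{u})$ converges to the true covariance $C(t)$ by a law-of-large-numbers argument. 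The only subtlety here is that $C_{\mathrm{uu}}$ is the covariance of the \emph{whole} ensemble (safe and failure particles together), so I would need to keep that $C(t)$ and not $C_S(t)$; this is consistent with the statement of the lemma.

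Next I would treat the interaction term $\tfrac{G(u^{(j)})}{J}\sum_{k\in F}G(u^{(k)})(u^{(k)}-\overline u)$. As $J\to\infty$, $\tfrac1J\sum_{k\in F}(\cdot)$ becomes $P_f$ times the conditional expectation over failure particles, i.e. $P_f\,\mathbb{E}\!\left[G(U_F)(U_F - m(t))\right]$ (note $\overline u \to m(t)$, the mean of the full ensemble). Writing $G(U_F) = U_{F,1} - b$ and using that $U_F$ is the standard Gaussian truncated to $\{U_1<b\}$ with mean $u^{\mathrm{opt}}$, I would compute $\mathbb{E}[G(U_F)\,U_F]$ and $\mathbb{E}[G(U_F)]\,m(t)$ componentwise. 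The first component of $\mathbb{E}[(U_{F,1}-b)U_F]$ reduces to the variance-type integral $\mathbb{E}[(U_{F,1}-b)U_{F,1}]$, and the remaining components vanish because $U_{F,i}$ for $i\ge 2$ is independent standard normal with mean zero and $G$ depends only on the first coordinate. Carrying out the truncated-Gaussian moment computations (using $\int_{-\infty}^b x\varphi_1(x)\,dx = -\varphi_1(b)$ and $\int_{-\infty}^b x^2\varphi_1(x)\,dx = \Phi(b) - b\varphi_1(b)$) should collapse the whole interaction term into $G(u_S(t))\,P_f\big((1,0,\dots,0)^T - m(t)(u_1^{\mathrm{opt}} - b)\big)$, which is exactly the second summand of \eqref{mean field limit failure}. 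Here I would use $P_f = \Phi(b)$ and the definition $u_1^{\mathrm{opt}} = -\varphi_1(b)/\Phi(b)$ from \eqref{mean opt} to match the stated form; in particular $\mathbb{E}[G(U_F)] = u_1^{\mathrm{opt}} - b$ and $P_f\,\mathbb{E}[(U_{F,1}-b)U_{F,1}] = \varphi_1(b) - b(\varphi_1(b)-b\Phi(b)) \cdot(\text{appropriate grouping})$, which I expect to simplify to $P_f\cdot 1$ in the first coordinate after using $\varphi_1'(b) = -b\varphi_1(b)$.

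The main obstacle I anticipate is making the mean-field passage rigorous rather than formal: justifying that the empirical measure of $\{u^{(j)}(t)\}_{j=1}^J$ converges (in a suitable sense) to the law of a process $U(t)$ solving the self-consistent McKean--Vlasov-type equation, uniformly on compact time intervals, so that one may replace $C_{\mathrm{uu}}(\mathbf{u})\to C(t)$, $\overline u \to m(t)$, and the empirical failure-average by its expectation simultaneously. Since the paper works at a formal level (the phrase ``formal proofs are given in the Appendix'' and the parallel treatment in Section~\ref{Sec no failure} suggest the mean-field limit is taken heuristically, exploiting that the coefficients are at most affine in $u^{(j)}$ and linear in the empirical moments), I would mirror that: argue that because the right-hand side of \eqref{gradient flow rare events} depends on the ensemble only through $C_{\mathrm{uu}}$, $\overline u$, and the failure-particle empirical average — all of which concentrate as $J\to\infty$ — the limiting single-particle dynamics for a safe particle is governed by \eqref{mean field limit failure} with $C(t)$, $m(t)$ the true covariance and mean of the limiting law. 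The remaining bookkeeping — that failure particles are frozen so their contribution is the static $u^{\mathrm{opt}}$, and that the safe/failure split has masses $1-P_f$ and $P_f$ — is inherited directly from the discussion preceding the lemma, so the only genuinely new content is the truncated-Gaussian moment algebra in the second paragraph.
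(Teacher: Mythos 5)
Your proposal is correct and follows essentially the same route as the paper's proof: take the mean-field limit of \eqref{gradient flow rare events} term by term, replace $C_{\mathrm{uu}}(\mathbf{u})\to C(t)$ and $\overline{u}\to m(t)$, use that the frozen failure particles keep the truncated standard Gaussian law with fraction $P_f=\Phi(b)$, and reduce the interaction term to $P_f\bigl(\mathbb{E}[G(U_F)U_F]-m(t)\,\mathbb{E}[G(U_F)]\bigr)$ via the truncated-Gaussian moments $\mathbb{E}[U_{F,1}]=u_1^{\mathrm{opt}}$ and $\mathbb{E}[U_{F,1}^2]=1+bu_1^{\mathrm{opt}}$, exactly as in the appendix. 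The only cosmetic difference is that you work with unnormalised moment integrals while the paper invokes the conditional mean/variance formulas of the truncated Gaussian, which is the same computation.
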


\begin{remark}\label{Remark 1}
Since $D_u\left(\frac{1}{2}G(u)^2\right) = \left(G(u),0,\dots,0\right)^T$, $C(0)=\mathrm{Id}_d$ and $m(0)=0$, we see from~\eqref{mean field limit failure} that the dynamic acts only on the first component $u_{S,1}(t)$ of the particles for $t=0$. Moreover, this movement is independent of the other components $u_{S,i}(t)$ for $i=2,\dots,d$. Hence, the covariance matrix $C(t)$ and the mean $m(t)$ will only change in their fist components $C_{1,1}(t)$ and $m_1(t)$. Inductively, we conclude that the particle dynamic~\eqref{mean field limit failure} acts only on the first component $u_{S,1}(t)$ for all $t\ge 0$ while all other components $u_{S,i}(t)$ for $i=2,\dots,d$ remain constant. We note that this observation is also shown in~\eqref{equation covariance thm} for the case that no failure particles are present.
\end{remark}
The above remark implies that it is sufficient to consider the case $d=1$. In this case, the mean field equation of the safe particles reads as
\begin{align*}
\frac{\mathrm{d}u_S(t)}{\mathrm{d}t} = -C(t)G(u_S(t)) + G(u_S(t)) P_f\left(1 -m(t)(u^{\mathrm{opt}} - b)\right),
\end{align*}
where $C(t)\in\mathbb{R}$ is the variance of the ensemble. Thus, the mean $m_S(t)$ of the safe particles satisfies the flow 
\begin{align}
\frac{\mathrm{d}m_S(t)}{\mathrm{d}t} = -G(m_S(t))\left(C(t) - P_f\left(1 -m(t)(u^{\mathrm{opt}} - b)\right)\right).\label{dynamic safe mean}
\end{align}
To prove Theorem~\ref{Thm mean failure}, we show that the mean of the safe particles converges to $u^{\mathrm{MLFP}}$. Since $m_S(t)\ge u^{\mathrm{MLFP}}$ and $G(m_S(t))\ge0$, it is sufficient to show that
\begin{align}
C(t) - P_f\left(1 -m(t)(u^{\mathrm{opt}} - b)\right)>0, \quad\text{ for all } t\ge 0,\label{covariance ineq}
\end{align}
as long as $m_S(t)\neq u^{\mathrm{MLFP}}$. This guarantees that the dynamic~\eqref{dynamic safe mean} is negative and, thus, $m_S$ converges to $u^{\mathrm{MLFP}}$. Since the ensemble contains always the initial failure particles, we can bound the variance $C(t)$ from below.

\begin{lemma}\label{lemma covariance}
We consider $d=1$. If $m_S(t)\neq u^{\mathrm{MLFP}}$ and the safe particles are not collapsed to a single point, the variance $C(t)$ is bounded from below by
\begin{align*}
C(t) > (1-P_f)m_S(t)^2 + P_f(1+bu^{\mathrm{opt}})-((1-P_f)m_S(t)+P_f u^{\mathrm{opt}})^2.
\end{align*}
\end{lemma}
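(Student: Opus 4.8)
\emph{Proof idea.} The plan is to exploit the mixture structure of the ensemble: in $d=1$, $C(t)$ is the variance of $U(t)$, and $U(t)$ is a mixture of the safe marginal $U_S(t)$ with weight $1-P_f$ and of the frozen failure marginal $U_F$ with weight $P_f$. The claimed strict inequality is then obtained by discarding exactly one nonnegative quantity, the variance of the safe particles.

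First I would write $C(t)=\mathbb{E}[U(t)^2]-m(t)^2$. Using $\mathbb{P}(U(t)=U_S(t))=1-P_f$ and $\mathbb{P}(U(t)=U_F(t))=P_f$ (recorded just before~\eqref{mean safe and failure}), together with the fact that failure particles do not move, so that $U_F(t)$ has for every $t$ the law of a one-dimensional standard Gaussian conditioned on $\{U_1<b\}$, the mixture identity for second moments gives
\[
\mathbb{E}[U(t)^2] = (1-P_f)\,\mathbb{E}[U_S(t)^2] + P_f\,\mathbb{E}[U_F^2].
\]
Second, I would evaluate $\mathbb{E}[U_F^2]$, the second moment of a standard Gaussian truncated to $(-\infty,b)$. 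An integration by parts based on $\varphi_1'(z)=-z\varphi_1(z)$ yields $\int_{-\infty}^{b} z^2\varphi_1(z)\,\mathrm{d}z = \Phi(b)-b\varphi_1(b)$, hence
\[
\mathbb{E}[U_F^2] = 1 - \frac{b\varphi_1(b)}{\Phi(b)} = 1 + b\,u^{\mathrm{opt}},
\]
where the last equality uses $u^{\mathrm{opt}}=-\varphi_1(b)/\Phi(b)$ from~\eqref{mean opt}.

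Third, I would bound $\mathbb{E}[U_S(t)^2] = \mathrm{Var}(U_S(t)) + m_S(t)^2 > m_S(t)^2$, the strict inequality being exactly the content of the hypothesis that the safe particles are not collapsed to a single point. Substituting this into the expression for $C(t)$ and using $m(t) = (1-P_f)m_S(t) + P_f u^{\mathrm{opt}}$ (from~\eqref{mean safe and failure} and~\eqref{mean opt}) produces precisely
\[
C(t) > (1-P_f)m_S(t)^2 + P_f\bigl(1+b\,u^{\mathrm{opt}}\bigr) - \bigl((1-P_f)m_S(t)+P_f u^{\mathrm{opt}}\bigr)^2,
\]
which is the claim.

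The only point needing care is the strict positivity of $\mathrm{Var}(U_S(t))$: collapse of the safe marginal to a single point is equivalent to $\mathrm{Var}(U_S(t))=0$, so the hypothesis rules this out directly; the companion hypothesis $m_S(t)\neq u^{\mathrm{MLFP}}$ only records that, along the one-dimensional dynamics~\eqref{dynamic safe mean}, the collapsed state (at $u^{\mathrm{MLFP}}=b$) has not yet been reached. Everything else is the routine mixture decomposition of the second moment and the elementary truncated-Gaussian identity above, so I do not expect a genuine obstacle here.
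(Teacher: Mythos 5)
Your proposal is correct and follows essentially the same route as the paper: decompose $C(t)=\mathbb{E}[U(t)^2]-m(t)^2$ into the safe/failure mixture (the paper writes this via conditioning on $\{U(t)\ge b\}$ and $\{U(t)<b\}$), drop the strictly positive variance of the safe particles to get $\mathbb{E}[U_S(t)^2]>m_S(t)^2$, and use the truncated-Gaussian identity $\mathbb{E}[U(t)^2\mid U(t)<b]=1+bu^{\mathrm{opt}}$. The only cosmetic difference is that you rederive that identity by integration by parts, whereas the paper cites the standard truncated-Gaussian moment formulas.
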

In the proof of Theorem~\ref{Thm mean failure}, see Appendix~\ref{Appendix sec with failure}, we show with Lemma~\ref{lemma covariance} that~\eqref{covariance ineq} is valid for all $t\ge 0$ as long as $m_S(t)\neq u^{\mathrm{MLFP}}$. Together with the fact that $\mathrm{d}m_S(t)/\mathrm{d}t =0$ for $m_S(t) = u^{\mathrm{MLFP}}$, we conclude that
\begin{align*}
\lim_{t\rightarrow\infty} m_S(t) = u^{\mathrm{MLFP}},
\end{align*}
which implies that~\eqref{eq thm mean failure} holds true.
\begin{remark}
We note that the derived theoretical properties only hold true if the EnKF is applied without noise. In this case, the safe particles converge to the surface of the failure domain and remain there. However, the EnKF is more robust if noise is added to the observations. Therefore, we propose to employ the EnKF with noise for practical applications.
\end{remark}

\begin{remark}
Theorem~\ref{Lemma FORM} and~\ref{Thm mean failure} give a justification to consider the limit $t\rightarrow\infty$ or $\sigma\rightarrow 0$ even for nonlinear LSFs since we expect that the particles converge to the failure domain and a high number of particles is in proximity of the MLFP. 
\end{remark}

\begin{remark}
Since $P_f$ is typically small, we see from Theorem~\ref{Thm mean failure} that the EnKF can be applied to estimate $u^{\mathrm{MLFP}}$ and to approximate $P_f$ by $P_f^{\mathrm{FORM}}$.
\end{remark}

\section{Numerical experiments}\label{chapter numerical experiments}
We consider four numerical experiments to test the performance of the EnKF for rare event estimation. In all experiments, we compare the results of the EnKF with SIS. We consider the SIS algorithm given in \cite{Wagner20} which applies the vMFNM distribution as proposal density in the MCMC algorithm. In all experiments, we apply SIS without burn-in and $10\%$ of the samples are chosen as seeds for the simulated Markov chains via multinomial resampling. Hence, the simulated Markov chains have length equal to $10$. The EnKF is always applied with noise $\xi_{n+1}^{(j)}\sim\mathrm{N}(0, h_{n+1}^{-1})$, where $h_{n+1}$ is the step size of the EnKF update. We refer to the standard EnKF iteration~\eqref{EnKF rare events uk} as the \emph{EnKF with global covariances} and the multi-modal approach of Section~\ref{Subsection multi modal EnKF} as the \emph{EnKF with local covariances}.
\\The performance is measured via the required computational costs and the achieved \emph{relative root mean square error} (relRMSE), which is defined as
\begin{align*}
\mathrm{relRMSE} := \frac{\left(\mathbb{E}\Bigl\lbrack(\hat{P}_f-P_f)^2\Bigr\rbrack\right)^{\frac{1}{2}}}{P_f},
\end{align*}
where $\hat{P}_f$ is the estimated probability of failure by the EnKF or SIS. The computational costs are equal to the number of required LSF evaluations. In all plots, we remove the probability of failure estimates which are larger than the $99$th percentile of the estimates. This removes outliers\footnote[1]{An estimate $\hat{x}$ is an outlier if $\hat{x}\ge x_{0.75}+3(x_{0.75}-x_{0.25})$, where $x_{0.25}$ and $x_{0.75}$ are the $25$th and $75$th percentiles. This criterion is based on Tukey's fences \cite[Section 2D]{Tukey77} and indicates that an estimate is \emph{far out}.} which might occur during the iterations. We are aware that these occur due to the choice of the parametric model of the IS density $p$ which leads to large likelihood weights in~\eqref{Pf IS} in very infrequent simulation runs. This observation in also discussed in~\cite[Section 9.3]{Owen13}. The percentage of outliers in the EnKF estimates is between $0.6\%-2.0\%$ for the first three examples and $3.2\%$ for the fourth example. Hence, by removing particles which are larger than the $99$th percentile, not all outliers are removed. The percentage of outliers in the SIS estimates is smaller than $1\%$.
\\We begin with three examples which are also considered in \cite{Papaioannou16}. These examples do not require expensive forward model evaluations and their inputs are two-dimensional independent standard normal random variables. Therefore, we can visualize how the ensemble of particles is evolving during the EnKF iterations and compare them with the samples generated by SIS. In the fourth example, we consider the diffusion equation in one-dimensional space with stochastic diffusion coefficient. This example is also considered in \cite{Ullmann15, Wagner20} and has a high-dimensional parameter space. The $\mathrm{relRMSE}$ and the average computational cost are estimated with $500$ independent simulation runs for the first three examples and with $100$ runs for the fourth example.

\subsection{Convex limit-state function}
We consider the following convex LSF, which is given in \cite{katsuki1994hyperspace},
\begin{align*}
G(u) = 0.1(u_1-u_2)^2 - \frac{1}{\sqrt{2}}(u_1+u_2) + 2.5.
\end{align*}
The corresponding probability of failure is $4.21 \cdot 10^{-3}$ and the failure domain has a single mode \cite{Papaioannou16}. We consider $J\in\{250, 500, 1000, 2000, 5000, 10000\}$ as the ensemble sizes and $\delta_{\mathrm{target}}\in\{0.25, 0.50, 1.00, 2.00, 5.00, 10.00\}$ as the target coefficients of variation. Since the failure domain is unimodal, we apply the EnKF with global covariances. We apply either the GM or vMFNM distribution model with one mixture component to fit the distribution of the final EnKF particles. We apply SIS with the vMFNM distribution model with one mixture component as the proposal density in the MCMC step.
\\Figure~\ref{Figure: example 1 samples} shows the samples of the final iteration of the EnKF and SIS for the convex LSF. Note that these are not the samples of the fitted distribution after the final step of the EnKF. We see that a small target coefficient of variation leads to more samples within the failure domain. This holds true for both, the EnKF and SIS. We observe that the EnKF samples are more spread along the surface of the failure domain. By Theorem~\ref{Lemma FORM} this observation is expected since the covariance of the ensemble stays constant in the direction parallel to the failure surface. In addition, the mean of the EnKF particles is in proximity to the MLFP and the mean of the optimal IS density as expected by Theorem~\ref{Thm mean failure}. In contrast, the SIS samples are centered around a certain mean value and are more similar to the optimal IS density. For $\delta_{\mathrm{target}} = 1.00$, the EnKF moves more samples into the failure domain than SIS. Therefore, we expect that for larger $\delta_{\mathrm{target}}$ the EnKF performs better than SIS.
\begin{figure}[htbp]
\centering
	\includegraphics[trim=0cm 1cm 0cm 0cm,scale=0.23]{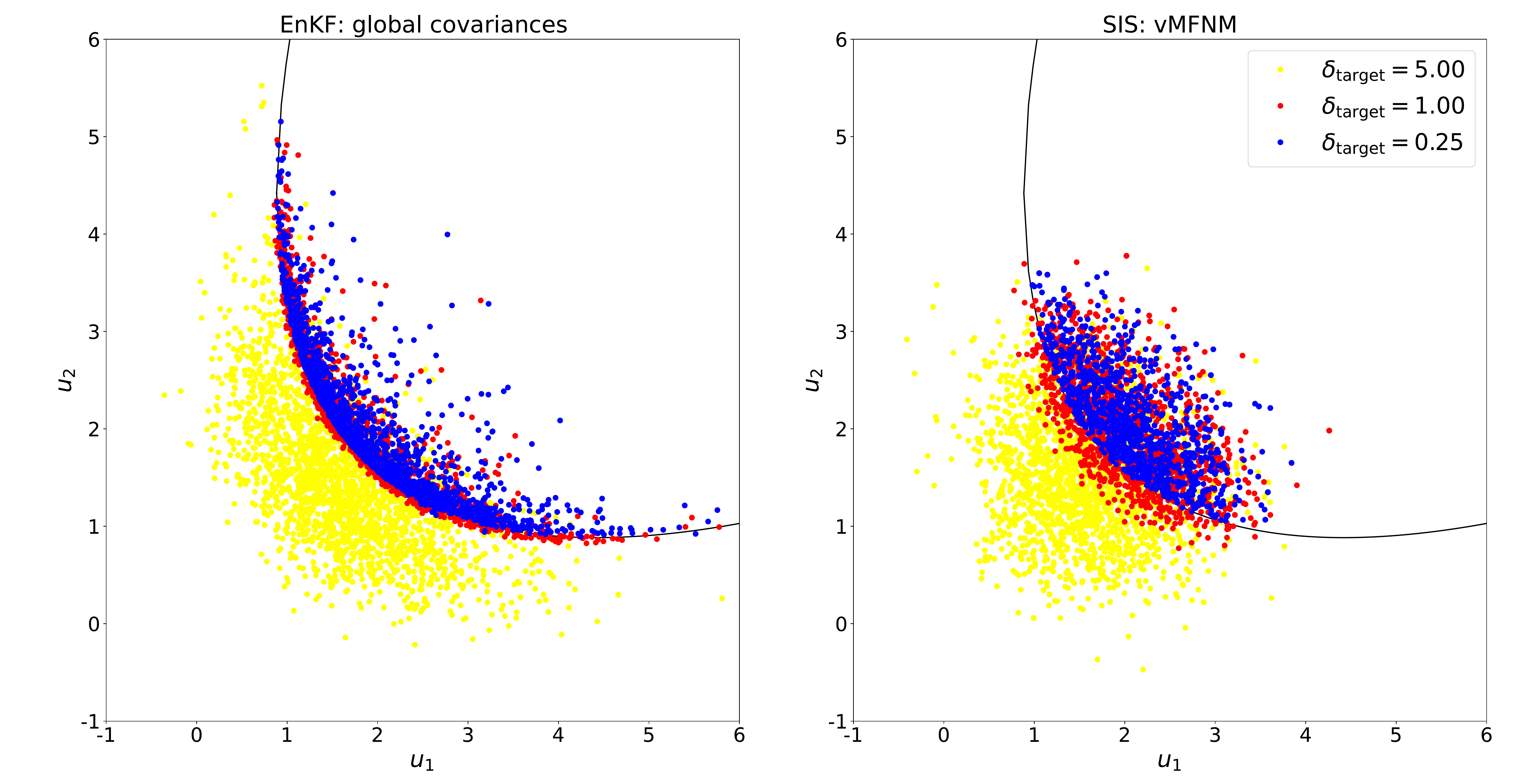}
	\caption{Convex LSF: Samples at the end of the iterations of the EnKF and SIS for $\delta_{\mathrm{target}}\in\{0.25, 1.00, 5.00\}$ and for $2000$ samples per level. The black lines show the boundary of the failure domain. Left: Samples of the EnKF with global covariances. Right: Samples of SIS with vMFNM and one mixture component.}\label{Figure: example 1 samples}
\end{figure}
\begin{figure}[htbp]
\centering
	\includegraphics[trim=0cm 1cm 0cm 0cm,scale=0.23]{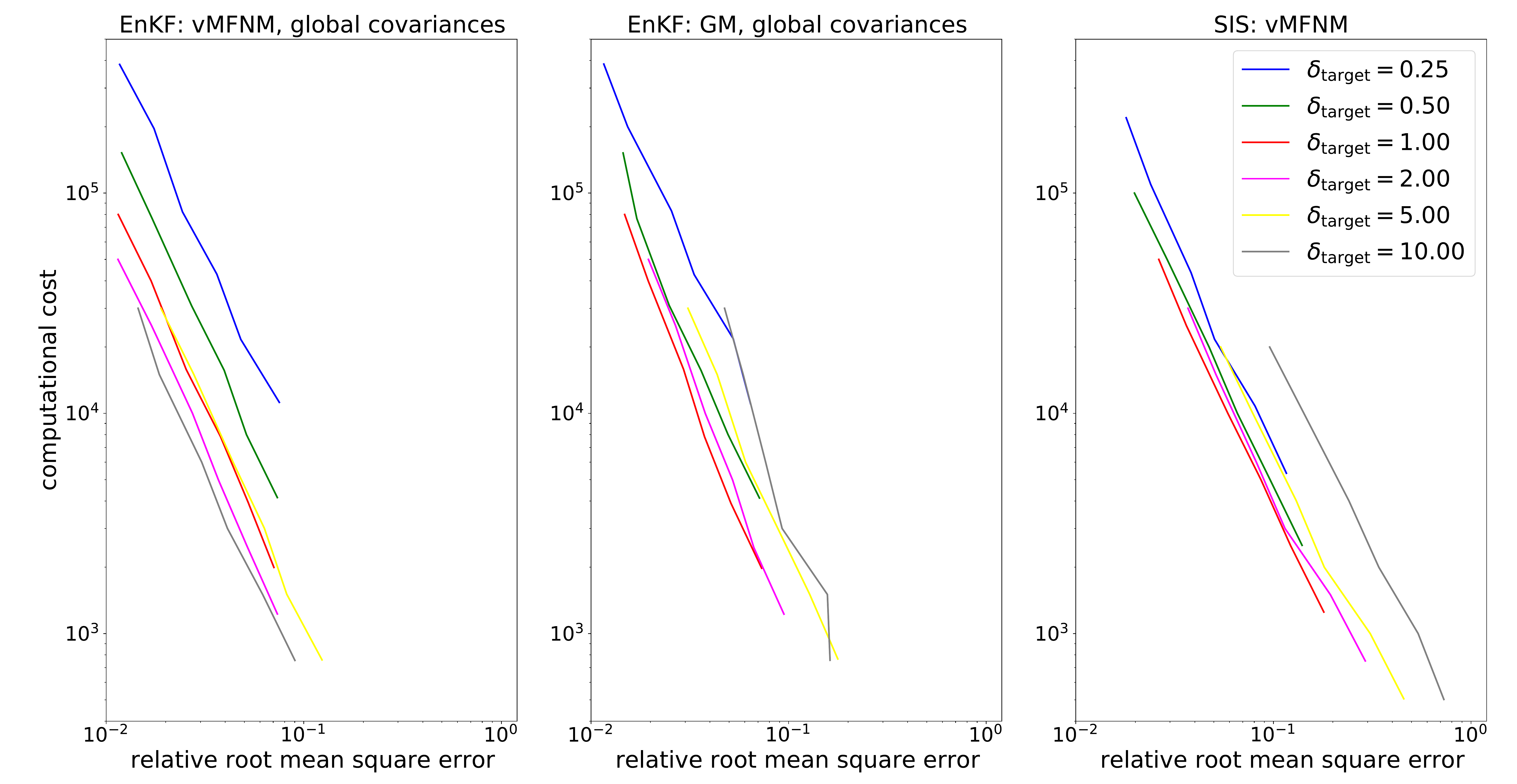}
	\caption{Convex LSF: Computational costs and relRMSE of the EnKF and SIS averaged over $500$ runs for $J\in \{250, 500, 1000, 2000, 5000, 10000\}$ samples per level and $\delta_{\mathrm{target}}\in\{0.25, 0.50, 1.00, 2.00, 5.00, 10.00\}$. Left: EnKF with vMFNM and global covariances; Middle: EnKF with GM and global covariances; Right: SIS with vMFNM. One mixture component is applied in the EnKF and SIS.}\label{Figure: example 1 error costs}
\end{figure}
\\Figure~\ref{Figure: example 1 error costs} shows the relRMSE on the horizontal axis and the computational costs on the vertical axis for the convex LSF. We observe that the EnKF reaches a slightly higher level of accuracy than SIS. For a larger number of samples per level, both methods have a smaller error. However, $\delta_{\mathrm{target}}$ has a smaller influence on the error for the EnKF than for SIS. For $\delta_{\mathrm{target}}\in\{0.25, 0.50, 1.00, 2.00\}$ the error stays constant for the EnKF with the vMFNM distribution. The error increases only for the two largest target coefficients of variations. In contrast, the error for SIS increases as $\delta_{\mathrm{target}}$ increases. For the EnKF with GM, the behaviour of the error is similar to SIS. In summary, the EnKF requires less computational costs than SIS for a fixed level of accuracy, which fits our expectations from Figure~\ref{Figure: example 1 samples}. 

\subsection{Parabolic limit-state function}
In \cite{der1998multiple}, the following parabolic LSF is proposed
\begin{align*}
G(u) = 5 - u_2 - \frac{1}{2}(u_1 - 0.1)^2.
\end{align*}
The exact probability of failure is $3.01 \cdot 10^{-3}$ \cite{Papaioannou16}. In this case, the failure domain consists of two distinct areas with high probability mass. Therefore, we apply the EnKF with local covariances. We set $\alpha=2$ and apply two mixture components to fit the GM and vMFNM distribution model in the final IS step of the EnKF. SIS is applied with the vMFNM distribution model with two mixture components. Moreover, we do not consider $J=10000$ for the ensemble size. Apart from this, we consider the same settings as for the convex LSF in the previous section.
\begin{figure}[htbp]
\centering
	\includegraphics[trim=0cm 1cm 0cm 0cm,scale=0.23]{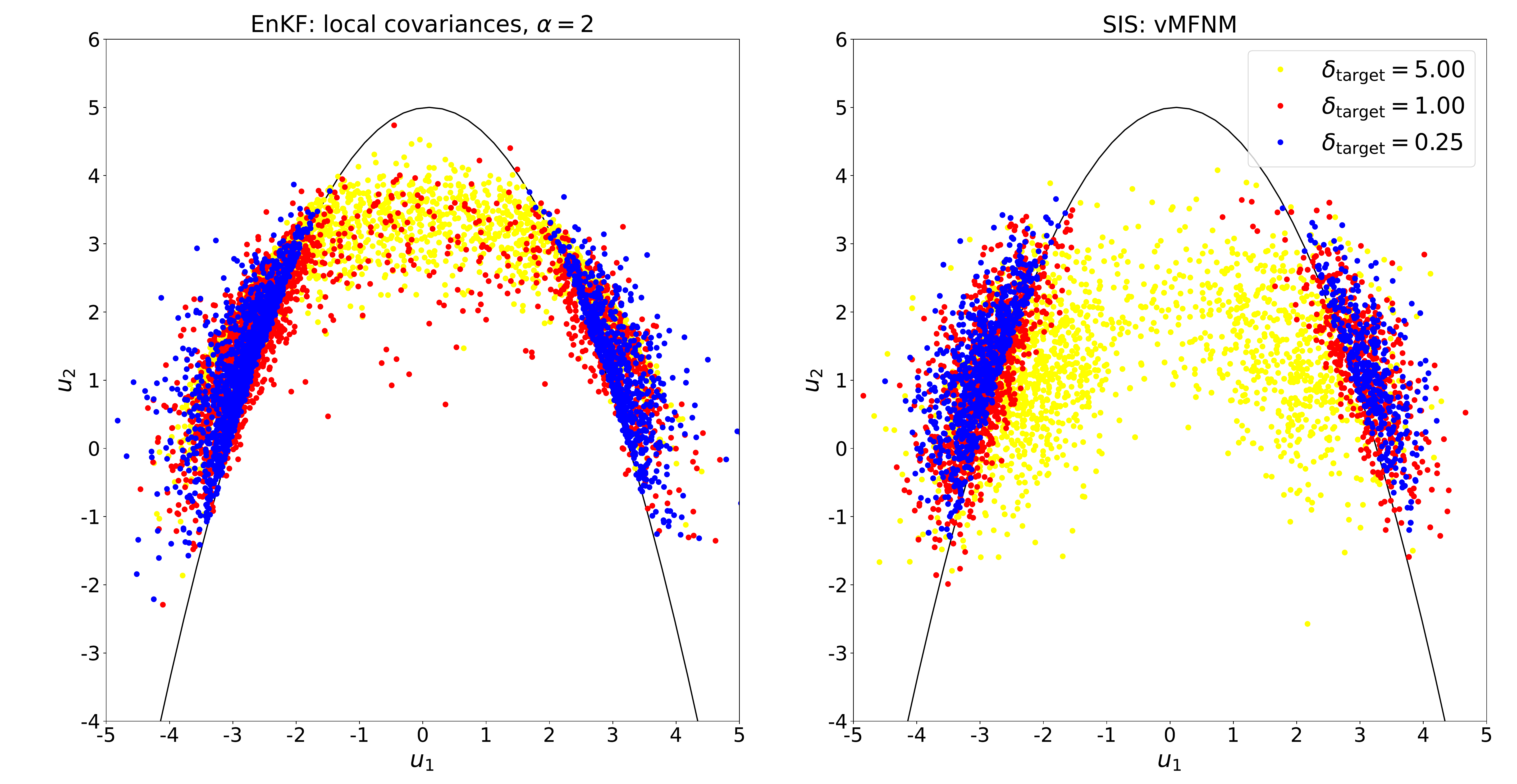}
	\caption{Parabolic LSF: Samples at the end of the iterations of the EnKF with local covariances and SIS for $\delta_{\mathrm{target}}\in\{0.25, 1.00, 5.00\}$ and for $2000$ samples per level. Two mixtures are considered for the distribution models. The black lines show the boundary of the failure domain. Left: Samples of the EnKF with local covariances and $\alpha=2$. Right: Samples of SIS with vMFNM and two mixtures.}\label{Figure: example 2 samples}
\end{figure}
\begin{figure}[htbp]
\centering
	\includegraphics[trim=0cm 1cm 0cm 0cm,scale=0.23]{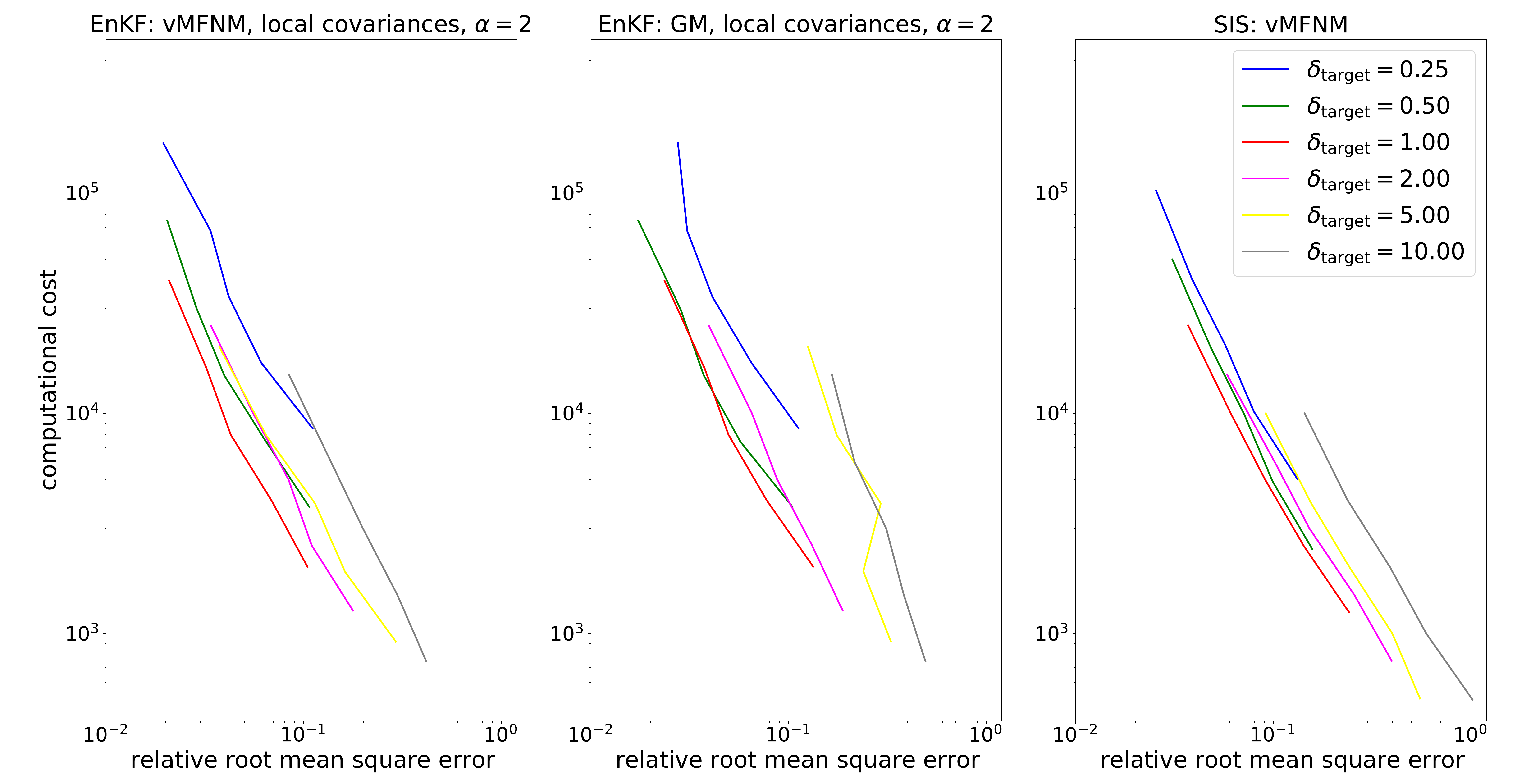}
	\caption{Parabolic LSF: Computational costs and relRMSE of the EnKF and SIS averaged over $500$ runs for $J\in\{250, 500, 1000, 2000, 5000\}$ samples per level and $\delta_{\mathrm{target}}\in\{0.25, 0.50, 1.00, 2.00, 5.00, 10.00\}$. Left: EnKF with vMFNM and local covariances, $\alpha=2$; Middle: EnKF with GM and local covariances, $\alpha=2$; Right: SIS with vMFNM. Two mixture components are applied in the EnKF and SIS.}\label{Figure: example 2 error costs}
\end{figure}
\\Figure~\ref{Figure: example 2 samples} shows the evolved samples of the final iteration of the EnKF and SIS for the parabolic LSF. We see for both methods that the generated samples concentrate near the two separated failure modes. However, for the EnKF with $\delta_{\mathrm{target}}=5$, many samples are not contained in the failure domain but are in between of the two failure modes. For SIS with $\delta_{\mathrm{target}}=5$, the samples are more concentrated around the failure modes. As in the previous example, we observe that the samples of the EnKF are more spread along the surface of the failure domain. 
\\Figure~\ref{Figure: example 2 error costs} shows the relRMSE and the computational costs for the parabolic LSF. Again, we observe that the EnKF reaches the same level of accuracy as SIS. For a larger number of samples per level, both methods have a smaller error. For $\delta_{\mathrm{target}}\in\{0.25, 0.50, 1.00\}$ the error of the EnKF is similar while for larger target coefficient of variations the error is larger for both distribution models. This behaviour is also observed in Figure~\ref{Figure: example 2 samples}. For $\delta_{\mathrm{target}}=5$, the samples do not separate clearly in the two failure modes. For SIS, we observe that the error decrease for decreasing $\delta_{\mathrm{target}}$, which implies larger computational costs. As for the convex LSF, the EnKF requires less computational costs than SIS for a fixed level of accuracy. Since $\alpha=2$ yields promising results, we do not consider a parameter study for $\alpha$, nor do we consider the adaptive approach given in Section~\ref{Subsection multi modal EnKF}. 

\subsection{Series system reliability problem}
In the third example, we consider a series system reliability problem given in \cite{waarts2000structural}, which is defined by the LSF 
\begin{align*}
G(u) = \min \left\{\begin{array}{c} 0.1(u_1 - u_2)^2-(u_1+u_2)/\sqrt{2}+3\\0.1(u_1 - u_2)^2+(u_1+u_2)/\sqrt{2}+3\\u_1 - u_2 + 7/\sqrt{2}\\u_2 - u_1 + 7/\sqrt{2}\end{array} \right\}.
\end{align*}
The corresponding probability of failure is $2.2\cdot 10^{-3}$ and the failure domain consists of four distinct modes \cite{Papaioannou16}. We apply SIS with the vMFNM distribution model with four mixtures components. The EnKF is applied with local covariances and four mixture components in the final fitting step. For this example, we consider the two approaches given in Section~\ref{Subsection multi modal EnKF} for determining the weight matrix $W$ in~\eqref{weight matrix}. We start by performing a parameter study for the parameter $\alpha$. Thereafter, we consider the adaptive approach given in Section~\ref{Subsection multi modal EnKF}, which we reference as the \emph{EnKF with adaptive local covariances}. 
\\We consider the parameter values $\alpha\in\{0.10, 0.25, 0.50, 0.75, 1.00, 2.00\}$ and apply the EnKF with local covariances and $2000$ samples per level. Figure~\ref{Figure: example 3 samples test local cov} shows the generated samples for varying $\alpha$ and $\delta_{\mathrm{target}}\in\{1.00, 5.00\}$. We observe that the samples are more concentrated for small $\alpha$. Indeed, for smaller values for $\alpha$, the samples separate into four failure modes. For $\alpha\ge 1$, nearly all samples are contained in two failure modes. In summary, for $\alpha\in\{0.25, 0.50, 0.75\}$ we expect good results since the samples capture well the four modes. For $\alpha=0.10$, the samples are too concentrated. Thus, we choose $\alpha=0.25$ and investigate the respective performance in more detail.
\begin{figure}[htbp]
\centering
	\includegraphics[trim=0cm 1cm 0cm 0cm,scale=0.23]{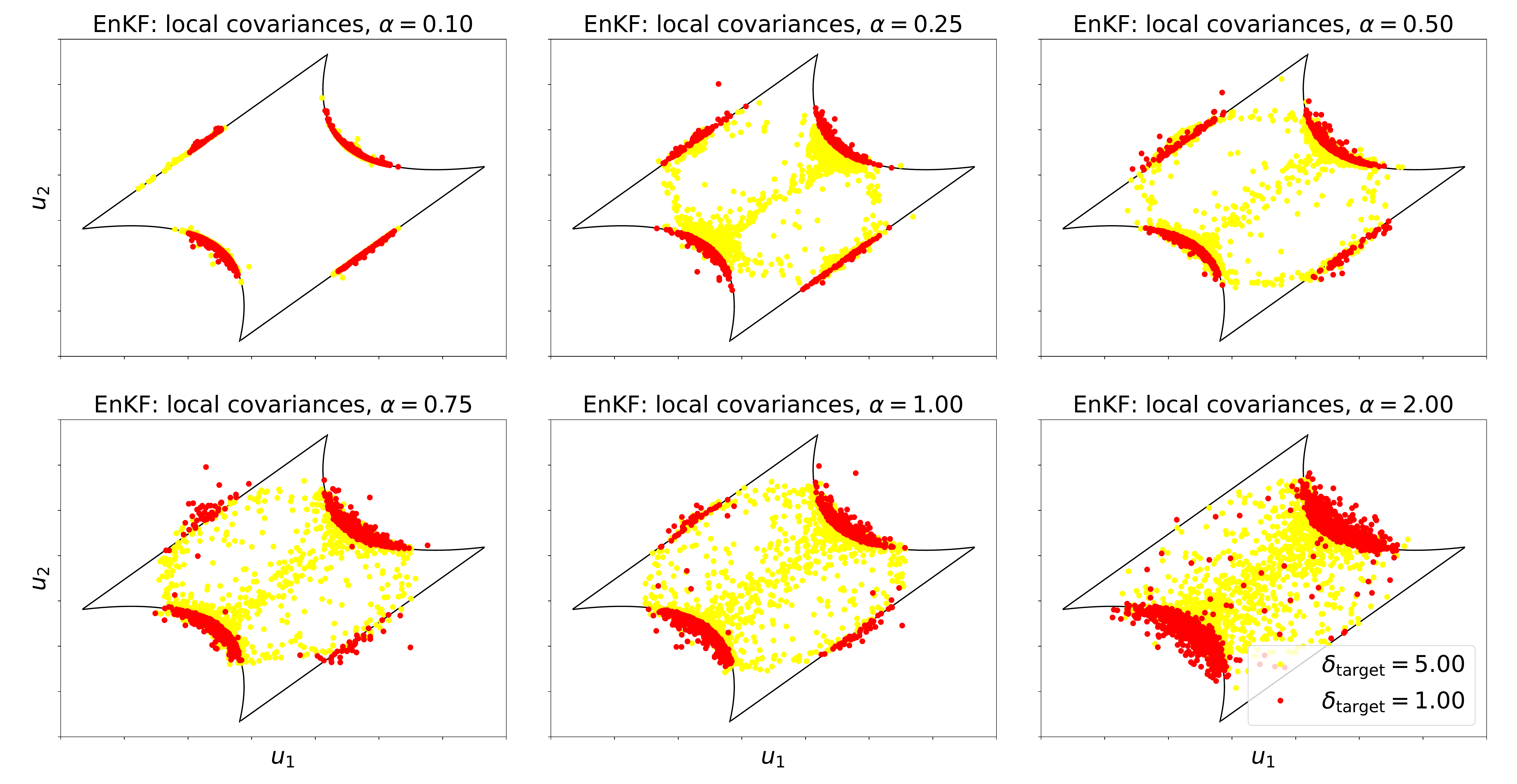}
	\caption{Series system LSF: Samples at the end of the iterations of the EnKF with local covariances for $\delta_{\mathrm{target}}\in\{1.00, 5.00\}$, $\alpha\in\{0.10, 0.25, 0.50, 0.75, 1.00, 2.00\}$ and $2000$ samples per level. The black lines show the boundary of the failure domain.}\label{Figure: example 3 samples test local cov}
\end{figure}
\begin{figure}[htbp]
\centering
	\includegraphics[trim=0cm 1cm 0cm 0cm,scale=0.23]{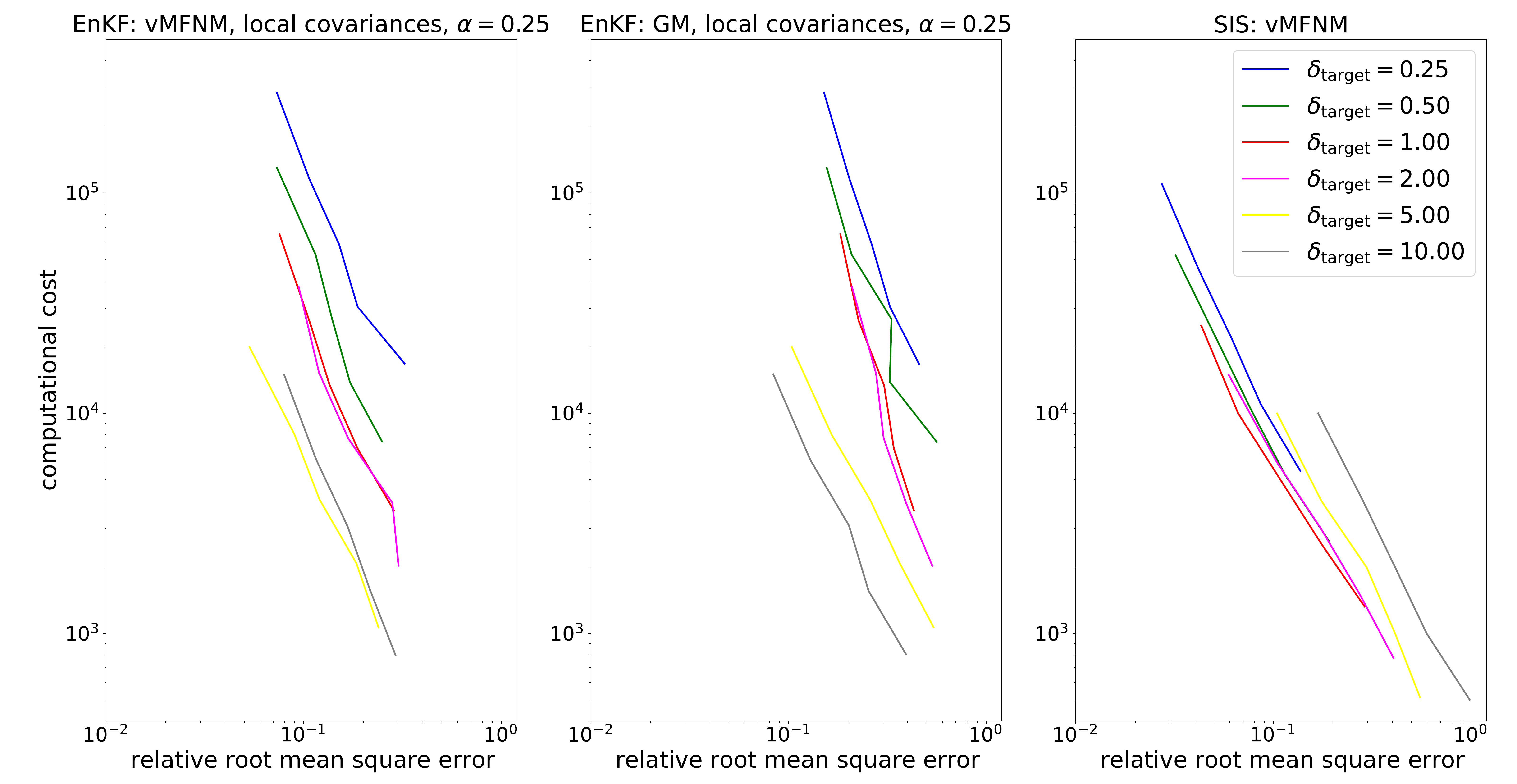}
	\caption{Series system LSF: Computational costs and relRMSE of the EnKF and SIS averaged over $500$ runs for $J\in\{250, 500, 1000, 2000, 5000\}$ samples per level and $\delta_{\mathrm{target}}\in\{0.25, 0.50, 1.00, 2.00, 5.00, 10.00\}$. Left: EnKF with vMFNM and local covariances, $\alpha=0.25$; Middle: EnKF with GM and local covariances, $\alpha=0.25$; Right: SIS with vMFNM. Four mixture components are applied in the EnKF and SIS.}\label{Figure: example 3 error costs test local cov 025}
\end{figure}
\\Figure~\ref{Figure: example 3 error costs test local cov 025} shows the performance of the EnKF with $\alpha=0.25$. The ensemble size is $J\in\{250, 500, 1000, 2000, 5000\}$ and $\delta_{\mathrm{target}}\in\{0.25, 0.50, 1.00, 2.00, 5.00, 10.00\}$ is the target coefficient of variation. We observe that a larger value for $\delta_{\mathrm{target}}$ leads to a smaller error. This is due to the fact that for small $\delta_{\mathrm{target}}$ the EnKF particles are more concentrated and do not always split into the four failure modes. Therefore, the estimates contain a bias. In particular for $\delta_{\mathrm{target}}\in\{5.00,10.00\}$, the EnKF yields a good performance. However, SIS requires less computational costs for a fixed level of accuracy.
\\In the following, we consider the adaptive approach of Section~\ref{Remark adaptive approach multi modal} to determine the weight matrix $W$. In particular, we fit the GM or vMFNM distribution model with four mixture components in each EnKF step to split the particles in a cluster with four components. Consequently, we calculate the empirical covariance matrix for each cluster and determine the weight matrix $W$ by~\eqref{weight matrix adaptive}.
\begin{figure}[htbp]
\centering
	\includegraphics[trim=0cm 1cm 0cm 0cm,scale=0.23]{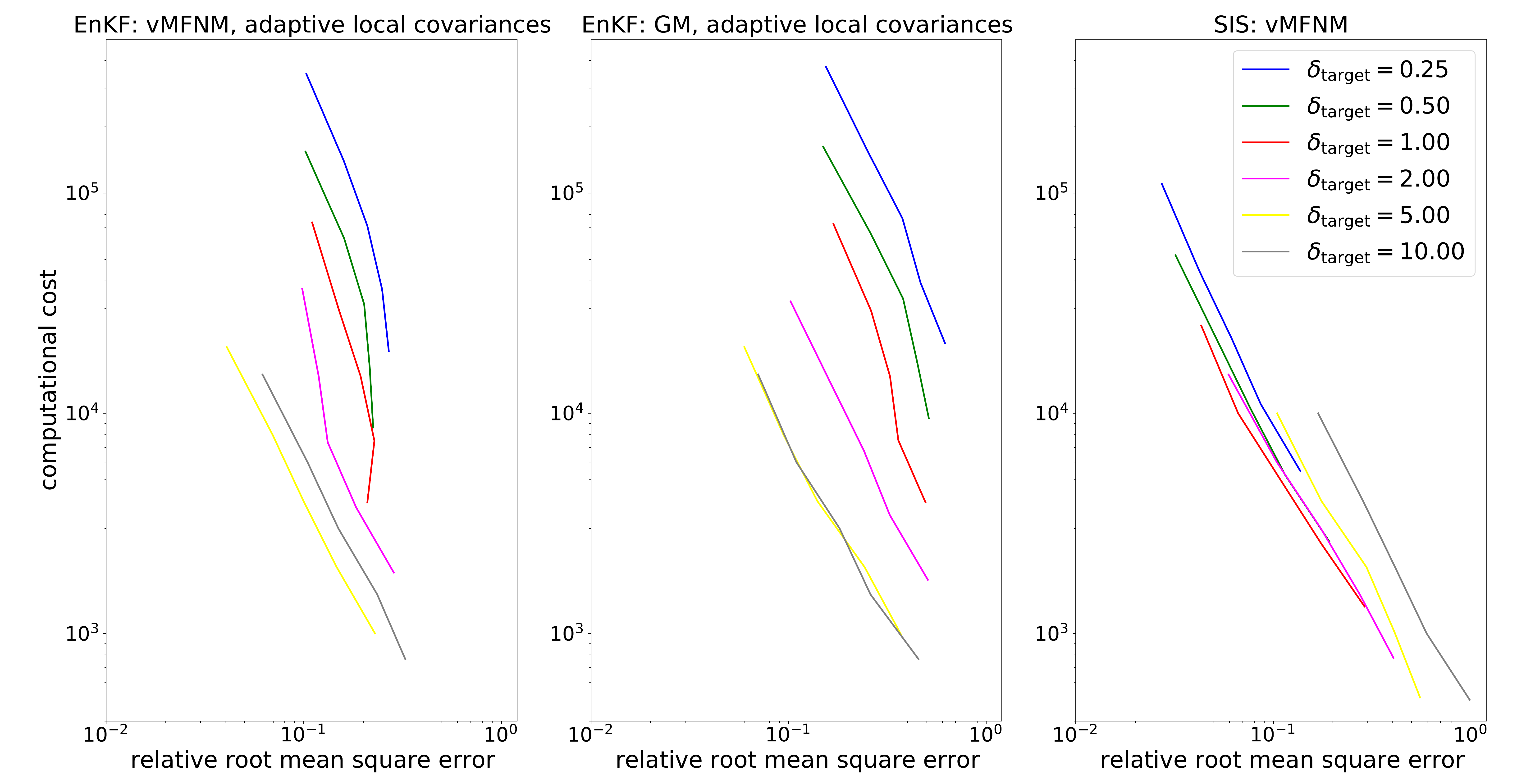}
	\caption{Series system LSF: Computational costs and relRMSE of the EnKF and SIS averaged over $500$ runs for $J\in\{250, 500, 1000, 2000, 5000\}$ samples per level and $\delta_{\mathrm{target}}\in\{0.25, 0.50, 1.00, 2.00, 5.00, 10.00\}$. Left: EnKF with vMFNM and adaptive local covariances; Middle: EnKF with GM and adaptive local covariances; Right: SIS with vMFNM. Four mixture components are applied in the EnKF and SIS.}\label{Figure: example 3 error costs adaptive local cov}
\end{figure}
\\Figure~\ref{Figure: example 3 error costs adaptive local cov} shows the relRMSE and the computational costs for the EnKF with adaptive local covariances. We observe a similar behaviour of the error as in Figure~\ref{Figure: example 3 error costs test local cov 025}. A larger value for $\delta_{\mathrm{target}}$ yields a smaller error. However, the EnKF with the vMFNM and $\delta_{\mathrm{target}}=5.00$ requires less computational costs than SIS for a fixed level of accuracy. 
\\We note that both local covariance approaches require more tempering steps. This is due to the fact that the particles interact less with their neighbours and, thus, move slower. This observation is also made in \cite{Reich19}.

\subsection{1D diffusion equation}
The final example considers the diffusion equation in the one-dimensional domain $D=(0,1)$. For $\mathbb{P}$-almost every (a.e.) $\omega\in\Omega$, we seek the weak solution $y(\cdot,\omega)\in H_0^1(D)$ such that for all $v\in H_0^1(D)$ it holds
\begin{align*}
\int_{D} a(x, \omega)\frac{\partial}{\partial x} y(x,\omega)\cdot\frac{\partial}{\partial x}v(x)\mathrm{d}x &= \int_D v(x)\mathrm{d}x.
\end{align*} 
The diffusion coefficient $a(x,\omega)=\exp(Z(x,\omega))$ is a log-normal random field. It is specified by its mean function $\mathrm{E}[a(x,\cdot)] = 1$ and standard deviation $\mathrm{Std}[a(x,\cdot)] = 0.1$. Thus, the mean function of $Z$ is $\mu_Z = \log(\mathrm{E}[a(x,\cdot)]) - {\sigma_Z^2}/{2}$ and the variance is given by $\sigma_Z^2 = \log\left(({\mathrm{Std}[a(x,\cdot)]^2 + \mathrm{E}[a(x,\cdot)]^2})/{\mathrm{E}[a(x,\cdot)]^2}\right)$. Moreover, we assume that $Z$ has an exponential type covariance function $c(x,y) = \sigma_Z^2\exp\left(-{\Vert x -y\Vert_1}/{\lambda}\right)$ with correlation length $\lambda = 0.01$. The truncated Karhunen--Lo\`eve (KL) expansion
\begin{align*}
Z_d(x,\omega) = \mu_Z + \sigma_Z \sum_{m=1}^d \sqrt{\nu_m}\theta_m(x) U_m(\omega)
\end{align*}
gives an approximation to the infinite-dimensional random field $Z$. Thus, we define $a_d = \exp(Z_d)$ as an approximation for $a$. The associated solution of the weak form is denoted by $y_d$. The eigenpairs $(\nu_m, \theta_m)$ can be analytically calculated as explained in \cite[Section 2.3.3]{Ghanem91}. Moreover, $U:=\{U_m\}_{m=1}^d$ are independent standard normal Gaussian random variables. We set $d=150$ which captures $87\%$ of the variability of $\log(a)$. 
\\In addition, we approximate the solution $y_d$ by piecewise linear, continuous finite elements on a uniform grid with mesh size $h=1/512$. The finite element approximation is denoted by $y_h$. Finally, we call the event $\omega\in\Omega$ a failure event if the solution $y_h(\cdot,\omega)$ is larger than $0.535$ at $x=1$. This gives the LSF
\begin{align*}
G(U(\omega)):= 0.535 - y_h(x=1,\omega).
\end{align*}
By crude Monte Carlo sampling with $2\cdot 10^8$ samples, the probability of failure is estimated as $P_{f} = 1.682\cdot 10^{-4}$. In the following, this value is referred to as the reference solution. We note that the truncation of the KL expansion and the discretization parameter $h$ induces an error in $G$. Thus, the probability of failure $P_{f}$ is an approximation to the exact one which requires the exact solution $y$. Since we always consider a fixed discretization level and fix the number of KL terms, the error is not present in the estimates. For an error analysis with respect to the discretization size $h$, we refer to \cite{Wagner20_2}. 
\\The probability of failure is estimated by the EnKF and SIS. The estimation is performed for $J\in\{250, 500, 1000, 2000\}$ samples per level and target coefficient of variation equal to $\delta_{\mathrm{target}}\in\{0.25, 0.50, 1.00, 2.00, 5.00, 10.00\}$. For the EnKF, we use the vMFNM as distribution model with one mixture. Moreover, we apply global covariances since we expect one single failure mode. The GM distribution is not considered, since it does not perform well in high dimensions. Similar we apply SIS with sampling from the vMFNM distribution with one mixture.
\begin{figure}[htbp]
\centering
	\includegraphics[trim=0cm 1cm 0cm 0cm,scale=0.23]{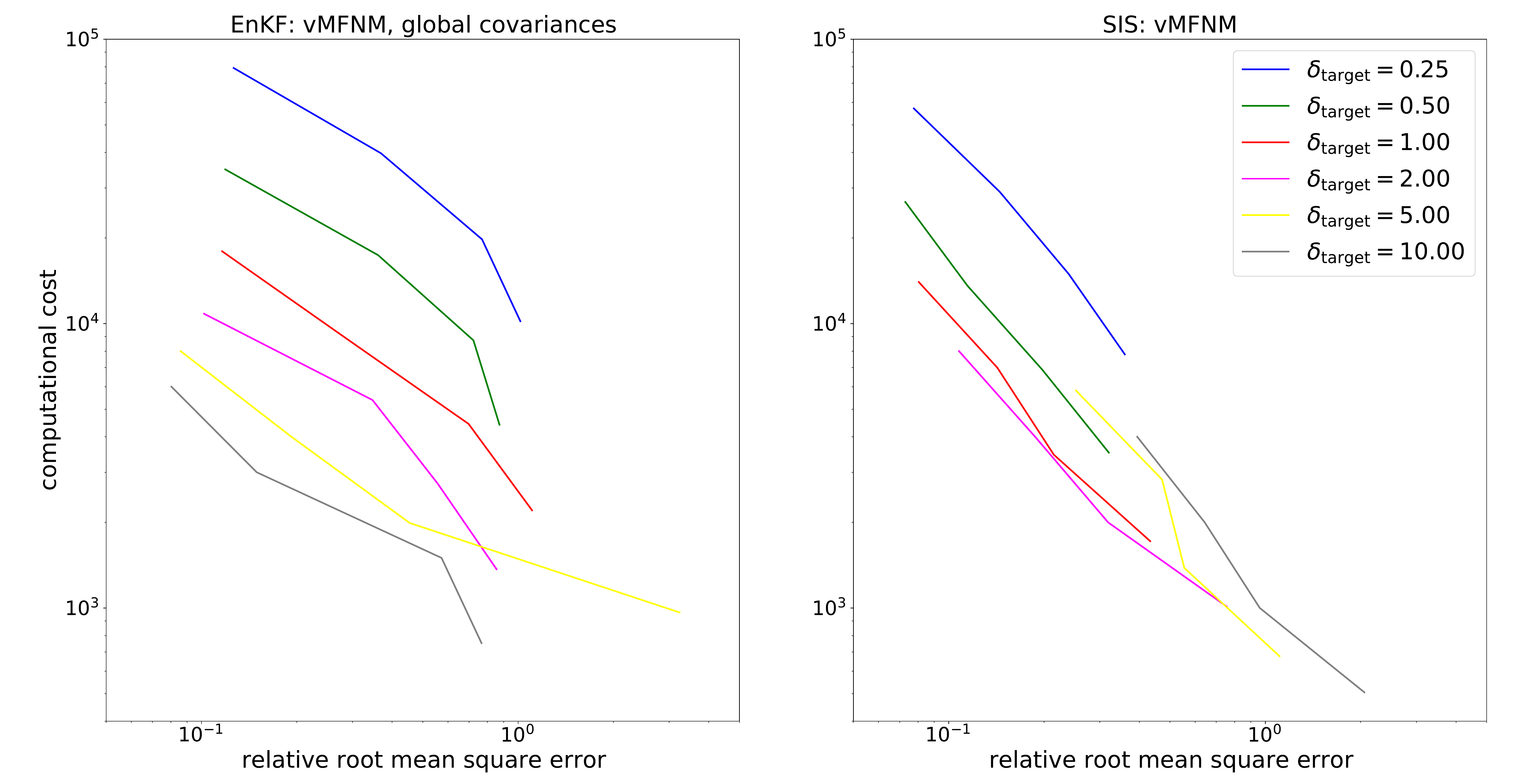}
	\caption{Diffusion equation: Computational costs and relRMSE of the EnKF and SIS averaged over $100$ runs for $J\in\{250, 500, 1000, 2000\}$ samples per level and $\delta_{\mathrm{target}}\in\{0.25, 0.50, 1.00, 2.00, 5.00, 10.00\}$. Left: EnKF with vMFNM and global covariances; Right: SIS with vMFNM. One mixture component is applied in the EnKF and SIS.}\label{Figure: example 4 error costs}
\end{figure}
\\Figure~\ref{Figure: example 4 error costs} shows the relRMSE and the computational costs for the diffusion equation problem. The EnKF yields the same level of accuracy as SIS. Indeed, the EnKF yields the smallest error with the largest target coefficient of variation. We note that $\delta_{\mathrm{target}}=10$ requires only one step to reach the stopping criterion. The surface of the failure domain might be highly nonlinear. If $\delta_{\mathrm{target}}$ is large, the final EnKF particles are more spread, which yields a better fitting distribution in this particular case and a smaller error. In contrast, a smaller value for $\delta_{\mathrm{target}}$ leads to a smaller error for SIS. This is the complete opposite observation as for the EnKF. This is due to the fact that for small $\delta_{\mathrm{target}}$, two consecutive densities in SIS are more similar and the estimation of the probability of failure is more robust. In summary, we conclude that the EnKF requires less computational costs for a fixed level of accuracy.

\section{Conclusion and Outlook}\label{Section Conclusion}
We introduce a novel sampling method for estimating small probabilities of failure that employs the EnKF for inverse problems. The proposed method reformulates the rare event problem as an inverse problem by concatenating the ReLU function with the LSF. For this reformulation, the EnKF is applied to generate failure samples in an adaptive manner. We have shown that the EnKF densities define an alternative sequence of piecewise smooth approximations of the optimal IS density as compared to the sequence employed in SIS. Consequently, a distribution model is fitted with the generated samples and the probability of failure is estimated with IS. We have employed two density models in the IS step, the GM model and the vMFNM distribution model. The GM model is applicable for low-dimensional parameter spaces, while its performance deteriorates for moderate- and high-dimensional spaces. In contrast, the vMFNM distribution model is applicable for moderately high-dimensional parameter spaces. To ensure that the parameters of the vMFNM distribution model are estimated accurately, the number of samples per level has to be large for high-dimensional parameter spaces.
\\For affine linear LSFs, we have derived the particle dynamic for the continuous time limit of the EnKF update. Under the assumption that the EnKF is applied without noise, we have proven that the ensemble mean converges to a convex combination of the most likely failure point and the mean of the optimal IS density in the large particle and large time limit.
\\To handle multi-modal failure domains, we localise the covariance matrices in the EnKF update around each particle. This localisation can be made adaptively using a clustering approach.
\\In numerical experiments, we compare the EnKF with SIS in terms of the relative root mean square error and the required computational costs. For single modal failure domains, the EnKF requires less computational costs than SIS for a fixed level of accuracy. However, for multi-modal failure domains, the application of the EnKF is not straightforward and SIS yields a better performance in some cases. 
\\For future work, our manuscript can be used as a starting point to analyse the EnKF for rare events for more general settings. In particular, the analysis for nonlinear LSFs and the analysis of the mean field limit of the EnKF with noise is still an open question. Moreover, the combination of the EnKF with a multilevel strategy would be beneficial for LSFs which can be approximated by a hierarchy of discretization levels. Finally, the EnKF algorithm could be extended for estimating the probabilities of failure associated with multivariate outputs. This approach could be beneficial in cases where the multiple outputs are based on a single model solve.

\appendix
\section{Proofs of Section~\ref{Sec EnKF for affine linear LSFs}}\label{Appendix}
\begin{proof}[Proof of Theorem~\ref{Theorem linear noise free}]
The authors of \cite{Schillings16} show for the continuous time limit $h\rightarrow 0$ of ~\eqref{EnKF rare events uk} and the noise free case that the particles satisfy the flow
\begin{align}
\frac{\mathrm{d}u^{(j)}}{\mathrm{d}t} = \frac{1}{J}\sum_{k=1}^J\langle \widetilde{G}(u^{(k)})-\overline{\widetilde{G}}, -\widetilde{G}(u^{(j)})\rangle (u^{(k)}-\overline{u}),\label{cts time limit}
\end{align}
where we used that $y^{\dagger}=0$ and $\Gamma = 1$. If $\widetilde{G}(u^{(j)})=0$, ~\eqref{cts time limit} implies that
\begin{align*}
\frac{\mathrm{d}u^{(j)}}{\mathrm{d}t}=0.
\end{align*}
Thus, failure particles do not move. Now we consider the case that $\widetilde{G}(u^{(j)})= G(u^{(j)})> 0$. The ensemble mean $\overline{\widetilde{G}}$ can be expressed as
\begin{align*}
\overline{\widetilde{G}}=\frac{1}{J}\sum_{k=1}^J\widetilde{G}(u^{(k)}) = \frac{1}{J}\sum_{k\in S} G(u^{(k)}) &= \frac{1}{J}\sum_{k=1}^{J} G(u^{(k)}) - \frac{1}{J}\sum_{k\in F} G(u^{(k)}) = G(\overline{u}) - C,
\end{align*} 
since $G$ is affine linear for all $u$ and we define $C:=\frac{1}{J}\sum_{k\in F} G(u^{(k)})<0$. Splitting up~\eqref{cts time limit} into the safe and failure states and using that $\widetilde{G}(u^{(k)}) = G(u^{(k)})$ for $k\in S$ and $\widetilde{G}(u^{(k)}) =0$ for $k\in F$, we get
\begin{align}
\frac{\mathrm{d}u^{(j)}}{\mathrm{d}t} = \frac{1}{J}& \left(\sum_{k\in S}\langle G(u^{(k)})-G(\overline{u})+C, -G(u^{(j)})\rangle(u^{(k)}-\overline{u})\right. \label{sum 1}
\\ &+ \left.\sum_{k\in F}\langle -G(\overline{u})+C, -G(u^{(j)})\rangle(u^{(k)}-\overline{u})\right).\label{sum 2}
\end{align}
The sum in~\eqref{sum 1} is equal to
\begin{align*}
&\sum_{k\in S}\langle G(u^{(k)})-G(\overline{u}), -G(u^{(j)})\rangle(u^{(k)}-\overline{u}) + \sum_{k\in S}\langle C, -G(u^{(j)})\rangle(u^{(k)}-\overline{u}):= S_1 + S_2,
\end{align*}
while the sum in~\eqref{sum 2} is equal to
\begin{align*}
&\sum_{k\in F}\langle G(u^{(k)})-G(\overline{u}), -G(u^{(j)})\rangle(u^{(k)}-\overline{u}) + \sum_{k\in F}\langle C-G(u^{(k)}), -G(u^{(j)})\rangle(u^{(k)}-\overline{u})
\\ &:= F_1 + F_2,
\end{align*}
where we have used the linearity of the scalar product. Adding $S_1$ and $F_1$ and multiplying with $1/J$ gives
\begin{align}
\frac{1}{J}\left(S_1 + F_1\right) &= \frac{1}{J} \sum_{k=1}^{J}\langle G(u^{(k)})-G(\overline{u}), -G(u^{(j)})\rangle(u^{(k)}-\overline{u})\notag
\\&=-C_{\mathrm{uu}}(\mathbf{u})D_u\left(\frac{1}{2}G(u^{(j)})^2\right),\label{sum 3}
\end{align} 
where we have applied~\eqref{EnKf inverse sample flow} since $G$ is affine linear. Adding the remaining parts gives
\begin{align}
\frac{1}{J}\left(S_2 + F_2\right) &= \frac{\langle C, -G(u^{(j)})\rangle}{J}\sum_{k=1}^J(u^{(k)}-\overline{u}) + \frac{1}{J}\sum_{k\in F}\langle G(u^{(k)}), G(u^{(j)})\rangle(u^{(k)}-\overline{u}) \notag
\\&= \frac{G(u^{(j)})}{J}\sum_{k\in F}G(u^{(k)})(u^{(k)}-\overline{u}),\label{sum 4}
\end{align}
since $\sum_{k=1}^J\left(u^{(k)}-\overline{u}\right) =0$. Adding~\eqref{sum 3} and~\eqref{sum 4} gives the desired result.
\end{proof}

\subsection{Proof of Section~\ref{Sec no failure}}

\begin{proof}[Proof of Lemma~\ref{Lemma FORM}]
We follow the proof of \cite[Lemma 3.2]{Garbuno20} and adjust it to the rare event setting. The second part of~\eqref{gradient flow rare events} is zero for all $t\ge0$ since all initial particles are in the safe domain and, at the time point a particle reaches the failure surface, it does not move anymore. Therefore, the dynamic of all particles satisfies for all $t\ge 0$ 
\begin{align*}
\frac{\mathrm{d}u^{(j)}}{\mathrm{d}t} = -C_{\mathrm{uu}}(\mathbf{u})D_u\left(\frac{1}{2}G(u^{(j)})^2\right) = -C_{\mathrm{uu}}(\mathbf{u})\left(aa^Tu^{(j)}-ab\right).
\end{align*}
The large particle limit $J\rightarrow\infty$ leads to the mean field equation at $t\ge 0$
\begin{align}
\frac{\mathrm{d}u(t)}{\mathrm{d}t} = -C(t)\left(aa^Tu(t)-ab\right),\label{mean field equation}
\end{align}
where $u(t)$ is a realisation of $U(t)$. With~\eqref{mean field equation}, we can derive the dynamic of the mean and covariance matrix. The ensemble mean satisfies
\begin{align}
\frac{\mathrm{d}m(t)}{\mathrm{d}t} = -C(t)\left(aa^Tm(t)-ab\right).\label{mean dynamic}
\end{align}
By defining $e(t)=U(t)-m(t)$, we get
\begin{align}
\frac{\mathrm{d}e(t)}{\mathrm{d}t} = -C(t)aa^Te(t).\label{e dot}
\end{align}
For the covariance it holds that $C(t) = \mathbb{E}[e(t)\otimes e(t)]$. Differentiating $C(t)$ with respect to $t$ and plugging in~\eqref{e dot}, it follows
\begin{align}
\frac{\mathrm{d}C(t)}{\mathrm{d}t}=\mathbb{E}\left[\frac{\mathrm{d}e(t)}{\mathrm{d}t}\otimes e + e\otimes\frac{\mathrm{d}e(t)}{\mathrm{d}t}\right] = -2C(t)aa^TC(t).\label{Covariance dynamic 2}
\end{align}
From~\eqref{Covariance dynamic 2}, it follows for the inverse of the covariance matrix
\begin{align}
\frac{\mathrm{d}C^{-1}(t)}{\mathrm{d}t} = -C^{-1}(t)\left(\frac{\mathrm{d}C(t)}{\mathrm{d}t}\right)C^{-1}(t) = 2aa^T.\label{Covariance inverse dynamic}
\end{align}
With the initial condition $C(0)=\mathrm{Id}_d$ and from~\eqref{Covariance inverse dynamic}, it follows
\begin{align*}
C(t) = \left(\mathrm{Id}_d + 2aa^Tt\right)^{-1} = \begin{pmatrix}
1/(1+2t) & \\
 & \mathrm{Id}_{d-1}
\end{pmatrix},
\end{align*}
where $\mathrm{Id}_{d-1}\in\mathbb{R}^{(d-1)\times (d-1)}$ is the identity matrix. 
Inserting the expression of the covariance matrix $C(t)$ in~\eqref{mean dynamic} gives
\begin{align*}
\frac{\mathrm{d}m_1(t)}{\mathrm{d}t} = \frac{b-m_1(t)}{1+2t},\quad\quad \frac{\mathrm{d}m_i(t)}{\mathrm{d}t} = 0,\quad\text{ for } i=2,\dots,d.
\end{align*}
With the initial condition $m(0)=0$, the entries of the mean are given by 
\begin{align*}
m_1(t) = b\left(1 - \frac{1}{\sqrt{2t+1}}\right), \quad\quad m_i(t)=0, \quad\text{ for } i=2,\dots,d.
\end{align*}
Since $\lim_{t\rightarrow\infty} m_1(t)=b$, we conclude that $\lim_{t\rightarrow\infty} m(t) = u^{\mathrm{MLFP}}$, which is the desired result.
\end{proof}

\subsection{Proofs of Section~\ref{Sec with failure samples}}\label{Appendix sec with failure}

\begin{proof}[Proof of Lemma~\ref{Lemma mean field limit}]
Theorem~\ref{Theorem linear noise free} gives the continuous time limit of the particle dynamic. We consider the mean field limit $J\rightarrow\infty$ for the two parts in~\eqref{gradient flow rare events} separately. For the first part, it holds that
\begin{align*}
\lim_{J\rightarrow\infty} -C_{\mathrm{uu}}(\mathbf{u})D_u\left(\frac{1}{2}G(u^{(j)})^2\right) = -C(t)D_u\left(\frac{1}{2}G(u^{(j)})^2\right).
\end{align*}
Now, we consider the second part in~\eqref{gradient flow rare events}. We split the sum into two parts as
\begin{align}
\frac{G(u^{(j)})}{J}&\sum_{k\in F}G(u^{(k)})(u^{(k)}-\overline{u})\notag 
\\ &= G(u^{(j)}) \frac{\vert F\vert}{J}\left(\frac{1}{\vert F\vert} \sum_{k\in F} G(u^{(k)})u^{(k)} - \frac{\overline{u}}{\vert F\vert} \sum_{k\in F} G(u^{(k)})\right).\label{eq 1}
\end{align}
At first, we see that $\lim_{J\rightarrow\infty} \vert F \vert/J = P_f$ since $\vert F\vert$ is the number of failure particles in the initial ensemble. The limit of the first sum in~\eqref{eq 1} is
\begin{align}
 \lim_{J\rightarrow\infty} \frac{1}{\vert F\vert}\sum_{k\in F} G(u^{(k)})u^{(k)} &= \mathbb{E}[G(U)U \mid G(U)<0]\notag
\\ &= \mathbb{E}[U_1U\mid U_1<b] - b \mathbb{E}[U\mid U_1<b].\label{eq expectation}
\end{align}
The second tern in~\eqref{eq expectation} is equal to $-bu^{\mathrm{opt}}$. Since the components of $U$ are independent, it holds that $\mathbb{E}[U_1U\mid U_1<b] = \left(\mathbb{E}[U_1^2\mid U_1< b], 0,\dots,0\right)^T$.
\\Since $\mathrm{Var}[U_1\mid U_1< b] = \mathbb{E}[U_1^2\mid U_1< b] - \mathbb{E}[U_1\mid U< b]^2$ we conclude that
\begin{align*}
\mathbb{E}[U_1^2\mid U_1<b] & = \mathrm{Var}[U_1\mid U_1<b] + \mathbb{E}[U_1\mid U_1\le b]^2
\\ &= 1 - b\frac{\varphi(b)}{\Phi(b)} - \frac{\varphi^2(b)}{\Phi^2(b)} + \left(-\frac{\varphi(b)}{\Phi(b)}\right)^2 = 1 + bu_1^{\mathrm{opt}},
\end{align*}
where we used the formula of the mean and variance of a truncated Gaussian \cite[Section 10.1]{Johnson94}. In summary we get
\begin{align*}
\lim_{J\rightarrow\infty} \frac{1}{\vert F\vert}\sum_{k\in F} G(u^{(k)})u^{(k)} = \left(1,0,\dots,0\right)^T.
\end{align*}
For the second sum in~\eqref{eq 1}, we get
\begin{align*}
\lim_{J\rightarrow\infty} - \frac{\overline{u}}{\vert F\vert}\sum_{k\in F} G(u^{(k)}) = - m(t)\mathbb{E}[G(U)\mid G(U)<0] = -m(t)(u_1^{\mathrm{opt}} - b)
\end{align*}
by linearity of $G$. In summary we conclude that
\begin{align*}
\lim_{J\rightarrow\infty} \frac{G(u^{(j)})}{J}\sum_{k\in F}G(u^{(k)})(u^{(k)}-\overline{u}) = G(u^{(j)}) P_f\left((1,0,\dots,0)^T -m(t)(u_1^{\mathrm{opt}} - b)\right).
\end{align*}
Together with the first limit, we get for $J\rightarrow\infty$ that
\begin{align*}
\frac{\mathrm{d}u^{(j)}}{\mathrm{d}t} = -C(t)D_u\left(\frac{1}{2}G(u^{(j)})^2\right) + G(u^{(j)}) P_f\left((1,0,\dots,0)^T -m(t)(u_1^{\mathrm{opt}} - b)\right),
\end{align*}
which is the desired result.
\end{proof}

\begin{proof}[Proof of Lemma~\ref{lemma covariance}]
We consider $d=1$. For the variance it holds that
\begin{align*}
C(t) = \mathrm{Var}[U(t)] = &\mathbb{E}[U(t)^2] - \mathbb{E}[U(t)]^2
\\= & (1-P_f)\mathbb{E}[U(t)^2\mid U(t)\ge b] + P_f\mathbb{E}[U(t)^2\mid U(t)< b]
\\ &- \left((1-P_f)m_S(t) + P_f m_F(t)\right)^2.
\end{align*}
Again, we use
\begin{align*}
\mathbb{E}[U(t)^2\mid U(t)\ge b] = \mathrm{Var}[U(t)\mid U(t)\ge b] + \mathbb{E}[U(t)\mid U(t)\ge b]^2 > m_S(t)^2,
\end{align*}
since the variance is always positive as long as $U_S(t)$ is not collapsed to a single point. Using that $\mathbb{E}[U(t)^2\mid U(t)< b] = 1+bu^{\mathrm{opt}}$ and $m_F = u^{\mathrm{opt}}$, we get
\begin{align*}
C(t) &> (1-P_f)m_S(t)^2 + P_f(1+bu^{\mathrm{opt}})-((1-P_f)m_S(t)+P_f u^{\mathrm{opt}})^2.
\end{align*}
\end{proof}

\begin{proof}[Proof of Theorem~\ref{Thm mean failure}]
It remains to show that
\begin{align}
C(t) - P_f\left(1 -m(t)(u^{\mathrm{opt}} - b)\right) >0.\label{ineq cova}
\end{align}
We check that this is true if $m_S(t)>u^{\mathrm{MLFP}}$. We start by using Lemma~\ref{lemma covariance} and splitting up $m(t)$. Thus,
\begin{align*}
C(t) & - P_f\left(1 -m(t)(u^{\mathrm{opt}} - b)\right)
\\ > & (1-P_f)m_S(t)^2 + P_f(1+bu^{\mathrm{opt}})-((1-P_f)m_S(t)+P_f u^{\mathrm{opt}})^2
\\ &- P_f\left(1 - \left((1-P_f)m_S(t)+ P_f u^{\mathrm{opt}}\right)(u^{\mathrm{opt}} - b)\right)
\\ = & P_f(1 - P_f)\left(m_S(t)^2 + m_S(t)(-u^{\mathrm{opt}}-b) + bu^{\mathrm{opt}}\right) := P_f(1 - P_f) f(m_S(t)).
\end{align*}
The quadratic function $f$ has the roots $m_{S,1} = b$ and $m_{S,2}=u^{\mathrm{opt}}$. Moreover, it holds that $f'(b)>0$ since $b>u^{\mathrm{opt}}$. It follows that $f(m_S(t))>0$ for $m_S(t)>b$. Together with $P_f(1-P_f)>0$,~\eqref{ineq cova} holds for all $t\ge 0$ and, thus,
\begin{align*}
\lim_{t\rightarrow\infty} m_S(t) = u^{\mathrm{MLFP}}.
\end{align*}
\end{proof}

\bibliographystyle{plain}     
\bibliography{literature}   

\end{document}